\theoremstyle{plain}
\newtheorem{thm}{Theorem}[section]
\newtheorem{prop}[thm]{Proposition}
\newtheorem{lemma}[thm]{Lemma}
\newtheorem{cor}[thm]{Corollary}
\theoremstyle{definition}
\newtheorem{defn}[thm]{Definition}
\newtheorem*{defn*}{Definition}
\newtheorem{example}[thm]{Example}
\newtheorem*{example*}{Example}
\newtheorem{rmk}[thm]{Remark}
\newcommand{\field}[1]{\mathbbm{#1}}
\newcommand{\N}{\field{N}}
\newcommand{\Q}{\field{Q}}
\newcommand{\Z}{\field{Z}}
\newcommand{\ideal}[1]{\mathfrak{#1}}
\newcommand{\m}{\ideal{m}}
\newcommand{\n}{\ideal{n}}
\newcommand{\p}{\ideal{p}}
\newcommand{\q}{\ideal{q}}
\newcommand{\ia}{\ideal{a}}
\newcommand{\cal}{\mathcal}
\newcommand{\cA}{\cal{A}}
\DeclareMathOperator{\Ass}{Ass}
\DeclareMathOperator{\wAss}{\widetilde{Ass}}
\DeclareMathOperator{\Spec}{Spec}
\DeclareMathOperator{\Supp}{Supp}
\DeclareMathOperator{\Hom}{Hom}
\DeclareMathOperator{\Tor}{Tor}
\newcommand{\ra}{\rightarrow}
\newcommand{\arrow}[1]{\stackrel{#1}{\rightarrow}}
\DeclareMathOperator{\sK}{sK}
\DeclareMathOperator{\K}{K}
\DeclareMathOperator{\im}{im}
\DeclareMathOperator{\ann}{ann}
\newcommand{\onto}{\twoheadrightarrow}
\renewcommand{\phi}{\varphi}
\author{Neil Epstein}
\address{Department of Mathematical Sciences \\ George Mason University \\ Fairfax, VA  22030}
\email{nepstei2@gmu.edu}
\author{Jay Shapiro}
\address{Department of Mathematical Sciences \\ George Mason University \\ Fairfax, VA  22030}
\email{jshapiro@gmu.edu}
\title{Strong Krull primes and flat modules}
\subjclass[2010]{13A99, 13B10, 13C11}
\keywords{associated prime, strong Krull prime, flatness, base change}
\date{December 20, 2013}
\begin{document}
\begin{abstract}
There are several theorems describing the intricate relationship between flatness and associated primes over commutative Noetherian rings.  However, associated primes are known to act badly over non-Noetherian rings, so one needs a suitable replacement.  In this paper, we show that the behavior of strong Krull primes most closely resembles that of associated primes over a Noetherian ring.  We prove an analogue of a theorem of Epstein and Yao characterizing flat modules in terms of associated primes by replacing them with strong Krull primes.  Also, we partly generalize a classical equational theorem regarding flat base change and associated primes in Noetherian rings.  That is, when associated primes are replaced by strong Krull primes, we show containment in general and equality in many special cases.  One application is of interest over any Noetherian ring of prime characteristic.  We also give numerous examples to show that our results fail if other popular generalizations of associated primes are used in place of strong Krull primes.
\end{abstract}

\maketitle

\section{Introduction}

The theory of associated primes is an essential part of any introduction to commutative Noetherian rings, and remains an important tool, especially in the modern age where primary decompositions may be sometimes computed effectively.  However, much of the theory breaks down in the non-Noetherian case.  Specifically, there are many axiomatic characterizations of the primes associated to an ideal (or a module), all equivalent for ideals or finite modules over a Noetherian ring, but which are all non-equivalent over general (non-Noetherian) commutative rings.  Hence, whenever one wants to generalize a theorem about Noetherian rings involving associated primes to the non-Noetherian case, the question arises as to which generalization, if any, is the right one for the problem at hand.  The problem with the \emph{usual} notion of associated primes (primes that are the annihilators of single elements of a module) is that $\Ass M$ is often empty even when $M \neq 0$.

The most common generalization seems to be the \emph{weakly associated} primes, or \emph{weak Bourbaki} primes of an $R$-module $M$, denoted $\wAss_R M$.  Their popularity  stems in part from the influence of Bourbaki, where they are introduced in an exercise \cite[Chapter IV, Section 1, exercises 17-19]{Bour-CA}.  One problem with this notion is that it does not admit maximal elements (see Example~\ref{ex:nonmax}).  Another popular choice is the set of \emph{Krull} primes of a module, denoted $\K_R(M)$, introduced in \cite{Kr-ohneend}; see also \cite{FHO-primal}, where infinite ``primal'' decomposition of ideals is explored via Krull primes.  One problem with the notion of Krull primes is that they do not respect short exact sequences the way that associated primes do over Noetherian rings (see Example~\ref{ex:KsK}).  Another is that they differ from the associated primes even over Noetherian rings (see Remark~\ref{rmk:Yao}).

Here, we will instead concentrate on the \emph{strong Krull} primes, or $\sK_R(M)$.  It is unclear whether to attribute these to McDowell \cite{Mcd-unpub} (where they have the name we use here) or Hochster \cite{Ho-grade} (where they correspond locally to saying a module has ``true grade 0'').  These seem to work better than the other two alternatives above for purposes of base change and other homological methods, a claim that we justify.  For example, we generalize a characterization of flat modules over a Noetherian ring \cite{nmeYao-flat}, due to the first named author and Y. Yao, to the non-Noetherian case by replacing associated primes with strong Krull primes (see Theorem~\ref{thm:flat}).  We also show that the analogue would fail for weakly associated primes (see Example~\ref{ex:EYwAss}).

We also show that if $S$ is a ring extension of $R$, $L$ an $R$-module and $M$  an $R$-flat $S$-module, then
$\sK_S(L \otimes_R M) \supseteq \bigcup_{\p \in \sK_RL} \sK_S(M / \p M)$ (see Theorem~\ref{thm:oneway}).  While equality does not hold in general (see Example~\ref{ex:onewayonly}), we are able to show that with certain additional assumptions on $R$ and $L$, equality does hold (e.g. Theorems~\ref{thm:Noethfg} and \ref{thm:Noeth}).  We then present an application of this result to a class of non-Noetherian ring extensions that appears naturally in the study of prime characteristic Noetherian rings (see Theorem~\ref{thm:perfclosure} and Corollary~\ref{cor:regcharp}).  On the other hand, if one were to use Krull primes instead, neither containment is valid, as we show in counterexamples (see Remark~\ref{rmk:Kfail} and Example~\ref{ex:Kfail}).

The structure of the paper is as follows: In \S\ref{sec:basics}, we introduce the reader to various generalizations of associated primes.  There we establish some key properties of these prime sets, including known ones as well as a maximality result for strong Krull primes.  In \S\ref{sec:flat}, we characterize flat modules using strong Krull primes and torsion-freeness.  The contents of \S\ref{sec:bc} are already outlined in the previous paragraph.  In \S\ref{sec:Hom}, we explore the relationship between strong Krull primes and $\Hom$-sets, generalizing a result of McDowell to a base change situation.  Finally, in \S\ref{sec:ex} we give examples to show that even though strong Krull primes are a useful tool, the intuition gleaned from working with associated primes over a Noetherian ring can lead one astray when working with strong Krull primes.

Throughout the paper, we make extensive use of examples in order to show which results are sharp and why strong Krull primes fit our purposes as well as they do.

\section{Basics}\label{sec:basics}

In this paper we consider four generalizations of the Noetherian ring notion of associated prime.  We begin by collecting the properties known to be satisfied by each definition.  Then we give examples to show that some properties fail for some of these notions.  In doing so, we justify our choice to concentrate mainly on the one given in the paper's title.

\begin{defn}
Let $R$ be a commutative ring, $M$ an $R$-module. The four subsets $\Ass_RM$, $\wAss_RM$, $\K_R(M)$, and $\sK_R(M)$ of $\Spec R$ are defined as follows: For $\p \in \Spec R$, \begin{itemize}
\item We say that $\p$ is an \emph{associated prime} of $M$ (that is, $\p \in \Ass_RM$) if there is some $z\in M$ such that $\p = \ann z$.
\item We say that $\p$ is \emph{weakly associated} to $M$ (that is, $\p \in \wAss_RM$) if there is some $z \in M$ such that $\p$ is minimal over the ideal $\ann z$. (These are sometimes called \emph{weak Bourbaki primes}.)
\item We say that $\p$ is a \emph{Krull prime} of $M$ (that is, $\p \in \K_R(M)$) if for any $a \in \p$, there is an element $z\in M$ such that $a \in \ann z \subseteq \p$.
\item We say that $\p$ is a \emph{strong Krull prime} of $M$ (that is, $\p \in \sK_R(M)$) if for any finitely generated ideal $I$ such that $I \subseteq \p$, there is an element $z\in M$ such that $I \subseteq \ann z \subseteq \p$. 
\end{itemize}
In all cases, the subscript $_R$ is optional when the ring is understood.  (Parentheses around the module in question are also optional.)
\end{defn}

Note that $\Ass_RM$,  $\wAss_RM$,  and $\sK_R(M)$ coincide whenever $R$ is Noetherian, and that $\K_R(M)$ also coincides when $R$ is Noetherian and $M$ is finitely generated.  In general, $\Ass (M) \subseteq \wAss(M) \subseteq \sK(M) \subseteq \K(M)$, with none of the implications reversible.  For an overview (including three additional non-equivalent generalizations), see \cite{IrRu-ass}.  For the reader's convenience, we provide a list of important properties in the next Proposition, most of which are covered in \cite{IrRu-ass}.  However:

\begin{rmk}\label{rmk:Yao}
It was incorrectly stated in \cite[p. 6]{Mcd-unpub} and reiterated in \cite[p. 346]{IrRu-ass} that  $\Ass_RM = \wAss_RM = \sK_R(M) =\K_R(M)$ for any $R$-module whenever $R$ is Noetherian.  However, there are Noetherian rings $R$ and $R$-modules $M$ for which $\K_R(M) \neq \Ass_RM$.

For instance, let $(R,\m)$ be any Noetherian local integrally closed domain of dimension at least two, and let \[
M := \bigoplus_{f \in \m} (R/fR) \cdot \mathbf{e}_f,
\]
where the $\mathbf{e}_f$ are free placeholder variables.  Then $\m \in \K_R(M)$, since for any $f$ in $\m$, $f \in \ann \mathbf{e}_f = fR \subset \m$.  However, $\m \notin \Ass_RM$.  Otherwise, there is some $z\in M$ such that $\m = \ann z$.  But then there is a finite set of elements $f_1, \dotsc, f_n \in \m$ such that $z=\sum_i \overline{a_i} \mathbf{e}_{f_i}$, so that $\m \in \Ass_R \left(\bigoplus_{i=1}^n (R / f_i R) \cdot \mathbf{e}_{f_i}\right)$.  But since associated primes respect direct sums, this means there is some $i$ such that $\m \in \Ass_R (R / f_i R)$. But since $R$ is normal, every associated prime of a  principal ideal has height at most 1, and since $\m$ has height at least two, we have our contradiction.

However, it is true that if $M$ is a \emph{finitely generated} module over a Noetherian ring $R$ (e.g. if it were cyclic, which for instance is the only case considered in \cite{FHO-primal}), then $\K_R(M) = \Ass_R(M)$, as can be seen via localization and prime avoidance.
\end{rmk}

\begin{prop}\label{pr:omnibus}
Let $\cA$ be one of $\Ass$, $\wAss$, $\K$, $\sK$.  Then we have the following, where $R$ is an arbitrary commutative ring with unity: \begin{enumerate}
\item For any $\cA$ above and $\p \in \Spec R$, $\cA_R(R/\p) = \{\p\}$.
\item Containment is respected: Namely, for any $\cA$ above, if $L \subseteq M$ is an $R$-submodule inclusion, then $\cA_R(L) \subseteq \cA_R(M)$.
\item The \emph{short exact sequence property}: namely, if $L \subseteq M$ is an $R$-submodule inclusion, then $\cA_R(M) \subseteq \cA_R(L) \cup \cA_R(M/L)$.  This property holds if $\cA = \Ass$, $\wAss$, or $\sK$.\footnote{But \emph{not} if $\cA=\K$; see Example~\ref{ex:KsK}.}
\item Detection of nonvanishing: If $M$ is an $R$-module, then $\cA_R(M) \neq \emptyset$ if and only if $M \neq 0$.  This property holds for $\cA = \wAss$, $\K$, or $\sK$ (but \emph{not} for $\cA=\Ass$\footnote{The usual counterexample is $R=k[X_1, X_2, \ldots] / (X_1^2, X_2^2, \ldots)$.}).
\item Local property: For $\p \in \Spec R$ and $M$ an $R$-module, we have $\p \in \cA_R(M)$ iff $\p R_\p \in \cA_{R_\p} (M_\p)$.  This property holds for $\cA = \wAss$, $\K$, or $\sK$ (but \emph{not} for $\cA=\Ass$\footnote{Let $k$ be a field and $R := k[X_1, X_2, \cdots, Y_1, Y_2, \cdots] / (\{X_1 X_n Y_n \mid n \in \N\})$.  Let $\p := (x_1, x_2, \cdots)$.  Then  $\p R_\p$ is the annihilator of $\frac{x_1}{1}$ in $R_\p$, but $0$ is the only element of $R$ that can be annihilated by all of $\p$; hence $\p \notin \Ass R$.}).
\item Generalized local property: For $W\subset R$ a multiplicative subset, we have \[
\cA_{R_W}(M_W) = \{\p R_W \mid \p \in \cA_R(M) \text{ and } \p \cap W = \emptyset\}.
\]
Again, this property holds for $\cA = \wAss$, $\K$, or $\sK$, but \emph{not} for $\cA=\Ass$.
\end{enumerate}
\end{prop}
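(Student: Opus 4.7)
I plan to verify the six parts for each of the four notions, citing \cite{IrRu-ass} for the routine cases and focusing on the strong Krull case, which drives the rest of the paper. Parts (1) and (2) are essentially immediate: because $R/\p$ is a domain, any nonzero element of $R/\p$ has annihilator exactly $\p$, which forces $\cA(R/\p) = \{\p\}$ in all four cases (the ``$\ann z \subseteq \q$'' half of each definition supplies $\p \subseteq \q$, and the source of $a$ or $I$ supplies $\q \subseteq \p$); and for (2) the annihilator $\ann_R z$ depends only on the $R$-action on the single element $z$, not on whether $z$ is viewed in $L$ or in $M$, so any witness for $\cA(L)$ automatically witnesses for $\cA(M)$.

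For the short exact sequence property (3) in the $\sK$ case, I would argue by contradiction: suppose $\p \in \sK(M) \setminus \sK(L)$, and fix a finitely generated $I_0 \subseteq \p$ admitting no $L$-witness. Given any finitely generated $I \subseteq \p$, apply $\p \in \sK(M)$ to the finitely generated ideal $I + I_0$ to produce $z \in M$ with $I + I_0 \subseteq \ann_R z \subseteq \p$. The element $z$ cannot lie in $L$, for then $z$ itself would be an $L$-witness for $I_0$. So $\bar z \neq 0$ in $M/L$. Moreover, if some $r \notin \p$ satisfied $rz \in L$, then $rz$ would be nonzero (because $r \notin \p \supseteq \ann z$) and a short computation would give $I_0 \subseteq \ann(rz) \subseteq \p$, yet another $L$-witness for $I_0$. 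Hence $\ann_R \bar z \subseteq \p$, and $\bar z$ witnesses $I$ in $M/L$. The analogous claims for $\Ass$ and $\wAss$ are simpler case-splits on whether $Rz \cap L$ is trivial, using a single witness $z$ rather than the $I + I_0$ device.

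For the local property (5) with $\cA = \sK$, in the forward direction every finitely generated $J \subseteq \p R_\p$ equals $I R_\p$ for some finitely generated $I \subseteq \p$ (clear unit denominators on the generators), and localizing an $M$-witness for $I$ gives an $M_\p$-witness for $J$. For the reverse direction, given a witness $z/s$ in $M_\p$ for $I R_\p$, a single $t \notin \p$ can be chosen to absorb $s$ and the finitely many relations $a_i (z/s) = 0$, producing $tz \in M$ with $I \subseteq \ann_R(tz) \subseteq \p$ (the second containment comes from pulling back $\ann_{R_\p}(tz/1) \subseteq \p R_\p$). Granting (5), I would deduce nonvanishing (4) for $\sK$ as follows. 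For $0 \neq z \in M$, choose $\p$ minimal over $\ann_R z$ by Zorn's lemma and localize. In $R_\p$, the element $z/1$ is nonzero and $\p R_\p = \sqrt{\ann_{R_\p}(z/1)}$, so any finitely generated $I' \subseteq \p R_\p$ satisfies $(I')^K \subseteq \ann(z/1)$ for some $K$. Iteratively replace the current element $w$ (starting with $z/1$) by $a_i w$ whenever some generator $a_i$ of $I'$ still fails to annihilate it; after at most $K$ such steps the running element would lie in $(I')^K \cdot (z/1) = 0$, so the procedure must halt at some $w \neq 0$ with $I' w = 0$, and in a local ring the condition $\ann w \subseteq \p R_\p$ is automatic for $w \neq 0$. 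Part (6) then follows from (5) via the standard bijective correspondence between primes of $R_W$ and primes of $R$ disjoint from $W$, together with the observation that $(R_W)_{\p R_W} = R_\p$ and $(M_W)_{\p R_W} = M_\p$.

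The main obstacle throughout is the ``for every finitely generated $I$'' quantifier in the definition of $\sK$: single-witness arguments that suffice for $\Ass$ and $\wAss$ do not transfer directly, and I need the $I + I_0$ consolidation in (3) and the iterative killing procedure in (4) to accommodate it, both of which are tailored to exploit finite generation of $I$.
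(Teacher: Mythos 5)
The paper gives no proof of this proposition at all; it simply remarks ``most of which are covered in \cite{IrRu-ass}'' and moves on, so there is no internal argument to compare against. Your reconstruction is correct and self-contained. The $I+I_0$ consolidation in (3) is a clean way to deal with the universal ``for every finitely generated $I$'' quantifier in the definition of $\sK$ (once no $L$-witness exists for a single $I_0$, absorbing $I_0$ into each $I$ forces the witness into $M/L$ with annihilator still inside $\p$), and I verified that the chain of implications goes through, including the step $\ann_R(rz)\subseteq\p$ from $\ann_R z\subseteq\p$ and $r\notin\p$. The iterative killing procedure in (4) is also sound; it is worth noting, though, that the paper itself later just uses the chain $\wAss\subseteq\sK$ together with the fact that a nonzero module always has a prime minimal over some $\ann z$, so what you have done is essentially re-derive the inclusion $\wAss\subseteq\sK$ inline, which is more work but not wrong.

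Two small points you should be careful to spell out if you write this up. In the forward direction of (5), the localized annihilator $\ann_{R_\p}(z/1)$ can be strictly larger than $(\ann_R z)R_\p$ (it equals $\{\,r/s : tr\in\ann_R z \text{ for some } t\notin\p\,\}$), so the containment $\ann_{R_\p}(z/1)\subseteq\p R_\p$ is not just ``localize the containment $\ann_R z\subseteq\p$''; it needs the extra observation that $tr\in\p$ with $t\notin\p$ forces $r\in\p$. Your phrase ``localizing an $M$-witness gives an $M_\p$-witness'' glosses over this but the conclusion is right. Also, the parenthetical that the $\Ass$ and $\wAss$ cases of (3) reduce to a case-split on whether $Rz\cap L=0$ is exactly right for $\Ass$, but for $\wAss$ the clean argument first localizes at $\p$ (so that $\p R_\p$ is the \emph{unique} prime over $\ann(z/1)$) before splitting; without localizing, the minimality of $\p$ over $(\ann z :r)$ is not immediate. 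Neither of these is a real gap, just details worth including.
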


At first blush, the notion of strong Krull prime looks more complicated than the others, and hence one might expect it to be too unwieldy to work with.  However, we have found that it is more robust than the other notions for our purposes.

\begin{example}\label{ex:KsK}
We present an example to show that unlike strong Krull primes, Krull primes do not obey the short exact sequence property.  The raw materials are more-or-less from McDowell \cite[Example 2.2]{Mcd-unpub}.

Let $D=F[\![x,y]\!]$, where $F$ is a field, and let $\m := (x,y)$ be the unique maximal ideal. Note that $y^s, x$ is a $D$-regular sequence for \emph{any} positive integer $s$.  Let $K$ be the total ring of quotients for $D$ (i.e. its field of fractions), and let $R := D + t K[\![t]\!]$, where $t$ is an analytic indeterminate over $K$.  Then $P := \m + t K[\![t]\!]$ is the unique maximal ideal of $R$.  Let $L := R/tR$.  As a $D$-module, this is just isomorphic to $D\oplus (K/D) t$ (with $t$ an indeterminate with respect to $D$), where the  $R$-module structure is obtained by having $t$ annihilate the module.   McDowell notes that $P$ is in $\K_R(L)$ but not in $\sK_R(L)$.  To see that $P$ is in $\K_R(L)$, one merely needs note that every element of $P$ is a zero-divisor on $L$.   However,  his  argument that $P \notin \sK_R(L)$ appears faulty; hence we give a different argument.  We will come at the result indirectly by producing a submodule $U$ of $L$ such that $P\notin \K_R(U) \bigcup \K_R(L/U)$. This actually proves two things; first, it shows Krull primes do not satisfy the short exact sequence property.  Second, since $\sK_R(N) \subseteq \K_R(N)$ for any $R$-module $N$, it shows that $P \notin \sK_R(U) \cup \sK_R(L/U)$, whence $P \notin \sK_R(L)$ (as strong Krull primes {\it do satisfy} the short exact sequence property; see Proposition~\ref{pr:omnibus}).

Let $U := (D_y / D) t$, considered as an $R$-submodule of $L$.  Then $P \notin \K_R(L/U)$ because $P$ contains an $(L/U)$-regular element -- namely $y$.  Similarly, $P \notin \K_R(U)$ because $P$ contains a $U$-regular element -- namely $x$ (for which we use the fact that $x$ is $(D/(y^s))$-regular for all positive integers $s$).
\end{example}

\begin{rmk}
 Note that in the above example we can replace $F[\![x,y]\!]$ with any local ring $D$ that contains a regular sequence of length 2, labelled say $y,x$.  In particular, one may set $D := F[x,y]_{(x,y)}$.  More generally, one may choose any integrally closed Noetherian local domain of dimension at least two.
\end{rmk}

Next, we provide a property that distinguishes the strong Krull primes from the weakly associated primes.

\begin{lemma}\label{lem:max}
Let $M$ be an $R$-module and $\p \in \sK_R(M)$.  Then there is some maximal element $P$ of  $\sK_R(M)$ such that $\p \subseteq P$.
\end{lemma}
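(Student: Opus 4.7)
The plan is to apply Zorn's lemma to the poset
\[
\cS := \{Q \in \sK_R(M) \mid Q \supseteq \p\},
\]
ordered by inclusion. This set is nonempty because it contains $\p$ itself, so everything hinges on verifying that every chain in $\cS$ has an upper bound inside $\cS$.

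Given a chain $\{\p_\alpha\}_{\alpha \in A}$ in $\cS$, I would set $P := \bigcup_{\alpha \in A} \p_\alpha$. This $P$ is a prime ideal of $R$ by the standard fact that a union of a totally ordered family of prime ideals is prime, and it clearly contains $\p$. It remains to prove $P \in \sK_R(M)$, i.e.\ that for every finitely generated ideal $I \subseteq P$ there is some $z \in M$ with $I \subseteq \ann z \subseteq P$.

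This is exactly where the \emph{finitely generated} clause in the definition of strong Krull prime earns its keep, and it is really the only non-trivial point of the argument. Writing $I = (a_1, \dotsc, a_n)$, each $a_i$ lies in some $\p_{\alpha_i}$. Because $A$ is totally ordered and the indices involved are finite, there exists a single index $\beta \in A$ with $\p_{\alpha_i} \subseteq \p_\beta$ for all $i$, and therefore $I \subseteq \p_\beta$. Since $\p_\beta \in \sK_R(M)$, we can choose $z \in M$ with $I \subseteq \ann z \subseteq \p_\beta \subseteq P$, as required. Hence $P \in \sK_R(M)$, and so $P \in \cS$ is an upper bound for the chain.

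Applying Zorn's lemma to $\cS$ then produces the desired maximal element $P$ of $\sK_R(M)$ with $\p \subseteq P$. The essentially only subtle step is the finite-intersection/union compatibility in the previous paragraph; everything else (chain unions of primes being prime, nonemptiness of $\cS$, and the Zorn invocation itself) is purely formal. I do not anticipate a serious obstacle.
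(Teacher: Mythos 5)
Your proof is correct and takes essentially the same route as the paper: both apply Zorn's lemma to $\sK_R(M)\cap V(\p)$ and verify that the union of a chain is again a strong Krull prime by using the finite generation of $I$ to land inside a single member of the chain. The only cosmetic difference is that the paper spells out the final observation that a maximal element of $\sK_R(M)\cap V(\p)$ is automatically maximal in $\sK_R(M)$, which you leave implicit.
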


\begin{proof}
Let $\{\p_j\}_{j \in J}$ be a chain in $\sK_R(M) \cap V(\p)$.  That is, $J$ is a totally ordered set, each $\p_j \in \sK_R(M)\cap V(\p)$, and for $j, j' \in J$ with $j \leq j'$, one has $\p_j \subseteq \p_{j'}$.  Let $Q := \bigcup_j \p_j$.  Let $I = (a_1, \dotsc, a_n)$ be a finitely generated subideal of $Q$.  There is some $j \in J$ such that $a_i \in \p_j$ for all $1\leq i \leq n$, and hence $I \subseteq \p_j$.  Since $\p_j \in \sK_R(M)$, there is some $z \in M$ such that $I \subseteq \ann z \subseteq \p_j \subseteq Q$, whence $Q \in \sK_R(M)$.  Since $Q \in V(\p)$ as well, we have shown that every chain in $\sK_R(M)\cap V(\p)$ has an upper bound.  Then by Zorn's lemma, $\sK_R(M)\cap V(\p)$ has a maximal element, which then must be a maximal element of $\sK_RM$ as well.
\end{proof}

\begin{example} \label{ex:nonmax}
The corresponding fact does \emph{not} hold for $\widetilde{\Ass}$.  Let $V$ be a valuation domain with value group $G:=\oplus_{i \in \N} \Z$ ordered lexicographically and valuation map $\nu$.  We introduce some notation.   Let $e_i\in G$ be the element with 1 in the $i$th coordinate and 0 everywhere else, so that the set $\{e_i\}_{i\in \N}$ is a basis for the free abelian group $G$.     For each $i=1,2 \ldots$ the set  \[
\p_i=\{a\in V \setminus \{0\} \mid \nu(a) = \sum_j n_j e_j >0 \text{ and} \min \{k \mid n_k \neq 0\} \leq i\} \cup \{0\}
\]
is a prime ideal of $V$.  Moreover these primes, along with the zero ideal and the maximal ideal $\m=\bigcup \p_i$, comprise all the prime ideals of $V$.   For each $i$, pick $a_i \in V$ such that $\nu(a_i) = e_i$; note that $\p_i$ is minimal over the ideal $(a_i)$.

Choose any $0\neq a \in \m$.  Then for any $\p_i$ such that $a\in \p_i$, we have that $\p_i \in \wAss_V(V/aV)$.  To see this, note that if $\nu(a) >e_i$, then $\frac{a}{a_i} \in V$ and $(aV :_V \frac{a}{a_i}) = (a_i)$; on the other hand if $\nu(a) \leq e_i$, then since $a \in \p_i$, we have $\nu(a) = e_i + n e_{i+1} + \sum_{j>i+1} n_j e_j$, where $n\leq 0$, in which case $\frac{a}{a_{i+1}} \in V$ and $(aV :_V a_{i+1}) = (a/a_{i+1})$.  In both cases, $\p_i$ is minimal over the resulting principal ideal.  

Next, note that $\m$ is not minimal over any proper subideal.  For suppose $\m$ is minimal over some ideal $J$.  Then for all $i$, $J \nsubseteq \p_i$.  Take any $x\in \m$.  Since $\m = \bigcup_i \p_i$, $x \in \p_i$ for some $i$.  Let $a \in J \setminus \p_i$.  Then $\nu(a)<\nu(x)$, so $x/a\in V$, and $x = a \cdot \frac{x}{a} \in J$, whence $J=\m$.  Since every ideal in $V$ of the form $(aV :_V b)$ is principal and $\m$ is not principal, it follows that $\m$ is not minimal over any such ideal, whence $\m \notin \wAss_V(V/aV)$.

On the other hand, $\m \in \sK_V(V/aV)$, as may be seen either by direct computation or from the fact that $\wAss_V(V/aV) \subseteq \sK_V(V/aV)$ along with Lemma~\ref{lem:max} and the fact that the only prime ideal containing the chain of elements comprising $\wAss_V(V/aV)$ is $\m$ itself.
\end{example}

The property from Lemma~\ref{lem:max} can be leveraged to provide the following local criterion for vanishing.

\begin{thm} \label{loczero}
Let $M$ be an $R$-module and $x\in M$.  The following are equivalent: \begin{enumerate}[label=(\alph*)]
\item\label{it:test0} $x=0$.
\item\label{it:testloc} $x/1 = 0$ in $M_\p$ for all $\p \in \sK_R(M)$.
\item\label{it:testmax} $x/1=0$ in $M_P$ for all \emph{maximal} elements $P$ of $\sK_R(M)$.
\end{enumerate}
\end{thm}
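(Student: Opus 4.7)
The implications (a) $\Rightarrow$ (b) $\Rightarrow$ (c) are immediate: (a) $\Rightarrow$ (b) is trivial, and (b) $\Rightarrow$ (c) holds because every maximal element of $\sK_R(M)$ is in particular an element of $\sK_R(M)$. So the real content is (c) $\Rightarrow$ (a), which I plan to prove by contrapositive.

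Assume $x \neq 0$, so the cyclic submodule $Rx \subseteq M$ is nonzero. By the nonvanishing property (Proposition~\ref{pr:omnibus}(4)) applied to $\cA = \sK$, the set $\sK_R(Rx)$ is nonempty, and by the containment property (Proposition~\ref{pr:omnibus}(2)), $\sK_R(Rx) \subseteq \sK_R(M)$. Pick any $\p \in \sK_R(Rx)$. The zero ideal is a finitely generated subideal of $\p$, so the definition of strong Krull prime produces an element $z \in Rx$ with $\ann_R(z) \subseteq \p$. Writing $z = rx$, one has $sx = 0 \Rightarrow sz = r(sx) = 0$, so $\ann_R(x) \subseteq \ann_R(z) \subseteq \p$.

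Now invoke Lemma~\ref{lem:max} to obtain a maximal element $P$ of $\sK_R(M)$ with $\p \subseteq P$. Then $\ann_R(x) \subseteq P$, so no element of $R \setminus P$ annihilates $x$, which means $x/1 \neq 0$ in $M_P$. This contradicts (c), completing the proof of (c) $\Rightarrow$ (a).

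The main obstacle is that, in the Noetherian/associated-prime setting, the standard argument produces a prime annihilator of some multiple of $x$; this is no longer available in the general non-Noetherian setting. My resolution is to bypass the need for an exact annihilator by using nonvanishing to produce \emph{some} strong Krull prime of the cyclic submodule $Rx$ (which automatically sits above $\ann_R(x)$), then pass up to a maximal element of $\sK_R(M)$ using Lemma~\ref{lem:max} — this is precisely the role that Lemma~\ref{lem:max} is designed to play, replacing the finite maximality of $\Ass$ over Noetherian rings.
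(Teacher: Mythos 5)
Your proof is correct, and it follows a route that is closely parallel to, but not identical with, the paper's. Both proofs reduce to producing a member $\p$ of $\sK_R(M)$ that contains $\ann_R(x)$ and then applying Lemma~\ref{lem:max} to push $\p$ up to a maximal element $P$; the difference is how $\p$ is obtained. The paper takes $\p$ to be a minimal prime over $\ann_R(x)$ and observes $\p \in \wAss_R(M) \subseteq \sK_R(M)$, thus leaning on the auxiliary notion of weakly associated primes. You instead stay entirely within the strong-Krull framework: nonvanishing (Proposition~\ref{pr:omnibus}(4)) applied to the cyclic submodule $Rx$ gives some $\p \in \sK_R(Rx) \subseteq \sK_R(M)$, and then your explicit computation with $I = (0)$ and $z = rx$ shows $\ann_R(x) \subseteq \p$. (That last step can be phrased even more cheaply: every strong Krull prime of a module contains the module's annihilator, and $\ann_R(Rx) = \ann_R(x)$.) The paper's version is marginally shorter; yours avoids invoking $\wAss$ and makes the role of the cyclic submodule explicit, which is a small conceptual gain in a paper whose theme is that $\sK$ is the right self-contained generalization.
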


\begin{proof}
Clearly (\ref{it:test0}) $\implies$ (\ref{it:testloc}) $\implies$ (\ref{it:testmax}).  So assume $x \neq 0$.  Let $\p$ be a minimal prime over $\ann(x)$.  Then $\p \in \wAss(M) \subseteq \sK(M)$, hence there is some maximal element $P \in \sK(M)$ such that $\p \subseteq P$, by Lemma~\ref{lem:max}.  In particular, since $\ann(x) \subseteq P$, one has $x/1 \neq 0$ in $M_P$.
\end{proof}

\section{Flatness criteria}\label{sec:flat}

The next result, which gives many equivalent criteria for flatness of a module over a Noetherian ring, was proved by the first named author and Y. Yao in \cite{nmeYao-flat}.  For this, we say that a module $M$ over a commutative ring $R$ with total quotient ring $Q$ is \emph{torsion-free} if the natural map $M \ra M \otimes_RQ$ is injective.

\begin{thm}[{\cite[part of Theorem 2.2]{nmeYao-flat}}]\label{thm:EYflat}
Let $R$ be a commutative \textbf{Noetherian} ring, let $Q$ be the total quotient ring of $R$, and let $M$ be an $R$-module.  If $M \otimes_R Q$ is a flat $Q$-module, then the following are equivalent: \begin{enumerate}[label=(\alph*)]
\item $M$ is flat.
\item $\Ass_R(L \otimes_R M) \subseteq \Ass_RL$ for any $R$-module $L$.
\item $L \otimes_RM$ is torsion-free for every torsion-free $R$-module $L$.
\item $P \otimes_R M$ is torsion-free for every $P \in \Spec R$.
\item $\Tor_1^R(R/P, M)$ is torsion-free for every $P \in \Spec R$.
\end{enumerate}
In any case, the following are equivalent: \begin{enumerate}[label=(\roman*)]
\item $M$ is faithfully flat.
\item $M \otimes_RQ$ is flat over $Q$, and $\Ass_R(L \otimes_RM) = \Ass_RL$ for every $R$-module $L$.
\item $M$ is flat and $\Ass_R(L \otimes_RM) = \Ass_RL$ for every \emph{simple} module $L$.
\end{enumerate}
\end{thm}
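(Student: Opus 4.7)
The plan is to establish the equivalences via the cycle (a) $\Rightarrow$ (b) $\Rightarrow$ (c) $\Rightarrow$ (d) $\Rightarrow$ (e) $\Rightarrow$ (a). Step (a) $\Rightarrow$ (b) invokes the classical Bourbaki formula $\Ass_R(L \otimes_R M) = \bigcup_{\p \in \Ass_R L} \Ass_R(M/\p M)$, valid for flat $M$ over Noetherian $R$; since $M/\p M \cong M \otimes_R R/\p$ is then flat over the domain $R/\p$, it is torsion-free as an $R/\p$-module, forcing $\Ass_R(M/\p M) \subseteq \{\p\}$, which yields (b). For (b) $\Rightarrow$ (c), I would use the standard Noetherian equivalence that $N$ is torsion-free iff every prime in $\Ass_R N$ consists of zerodivisors of $R$: if $L$ is torsion-free, then $\Ass_R L$ has this property, and (b) transfers it to $\Ass_R(L \otimes_R M)$, giving (c). Step (c) $\Rightarrow$ (d) is immediate since every ideal $P \subseteq R$ is a submodule of the torsion-free $R$-module $R$. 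For (d) $\Rightarrow$ (e), tensoring $0 \to P \to R \to R/P \to 0$ with $M$ identifies $\Tor_1^R(R/P, M)$ with a submodule of $P \otimes_R M$, and submodules of torsion-free modules are torsion-free.

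The core difficulty is the final step (e) $\Rightarrow$ (a), which is where the hypothesis on $M \otimes_R Q$ enters essentially. I would apply flat base change along the localization $R \to Q$: this produces $\Tor_1^R(R/P, M) \otimes_R Q \cong \Tor_1^Q(Q/PQ, M \otimes_R Q)$, which vanishes by flatness of $M \otimes_R Q$ over $Q$. Thus $\Tor_1^R(R/P, M)$ has vanishing image under the canonical map to its localization at the nonzerodivisors of $R$; but (e) asserts this module is torsion-free, so that map is injective, forcing $\Tor_1^R(R/P, M) = 0$ for every $P \in \Spec R$. Over Noetherian $R$ this suffices to conclude that $M$ is flat.

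For the faithfully flat equivalences, (i) $\Rightarrow$ (ii) holds because $M$ faithfully flat gives $M \otimes_R Q$ flat over $Q$, and the $\Ass$-equality combines the containment (b) with the reverse inclusion: for $\p \in \Ass_R L$, the embedding $R/\p \hookrightarrow L$ yields $M/\p M \hookrightarrow L \otimes_R M$ by flatness, $M/\p M \neq 0$ by faithful flatness, and $\Ass_R(M/\p M) = \{\p\}$ from the analysis in (a) $\Rightarrow$ (b), so $\p \in \Ass_R(L \otimes_R M)$. The implication (ii) $\Rightarrow$ (iii) is specialization to simple modules via the already-established (a) $\Leftrightarrow$ (b). Finally, (iii) $\Rightarrow$ (i) invokes the criterion that a flat module is faithfully flat iff $M/\m M \neq 0$ for every maximal ideal $\m$; here the equality $\Ass_R(R/\m \otimes_R M) = \Ass_R(R/\m) = \{\m\}$ forces $M/\m M \neq 0$.
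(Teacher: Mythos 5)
Your proof is correct, and it follows essentially the same cyclic skeleton (a)\,$\Rightarrow$\,(b)\,$\Rightarrow$\,(c)\,$\Rightarrow$\,(d)\,$\Rightarrow$\,(e)\,$\Rightarrow$\,(a) that the paper's non-Noetherian analogue (Theorem~\ref{thm:flat}) attributes to the cited Epstein--Yao result. Note that the paper itself merely cites Theorem~\ref{thm:EYflat} rather than proving it, so the closest comparison is with the proof of Theorem~\ref{thm:flat}. The one place you diverge is (a)\,$\Rightarrow$\,(b): you invoke the Bourbaki--Matsumura base-change formula $\Ass_R(L\otimes_R M)=\bigcup_{\p\in\Ass_R L}\Ass_R(M/\p M)$ together with $\Ass_R(M/\p M)\subseteq\{\p\}$ (via torsion-freeness of the flat $R/\p$-module $M/\p M$), whereas the paper's non-Noetherian treatment cannot rely on that Noetherian formula and instead reduces to the local case and uses the $\Hom(R/I,-)$ characterization (Lemma~\ref{lem:localcase}). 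Your route is cleaner in the Noetherian setting since the base-change formula is available off the shelf; the paper's route is the one that survives the passage to $\sK$. Your (b)\,$\Rightarrow$\,(c) also uses the Noetherian equivalence ``$N$ torsion-free iff every $\p\in\Ass_R N$ consists of zerodivisors of $R$'' rather than the contrapositive-with-minimal-prime argument used in Theorem~\ref{thm:flat}; again both are valid, and yours uses the cover $\bigcup_{\p\in\Ass_R N}\p=\{\text{zerodivisors on }N\}$, which is available only in the Noetherian setting. The remaining steps, including the flat-base-change collapse of $\Tor_1$ against $Q$ in (e)\,$\Rightarrow$\,(a) and the three implications in the faithfully-flat block, match the cited argument.
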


Here we extend this characterization to the non-Noetherian case, using strong Krull primes in place of the Epstein-Yao usage of associated primes.   We start with the following lemma (where as in the rest of this paper, local does not necessarily mean Noetherian):

\begin{lemma} \label{lem:localcase} If $(R,\m)$ is local and $M$ is a flat $R$-module, then for any $R$-module $L$, if $\m \in \sK_R(L \otimes_R M)$, then $\m \in \sK_RL$.  The reverse implication holds if $M$ is faithfully flat.
\end{lemma}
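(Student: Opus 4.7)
The plan is to translate the condition ``$\m \in \sK_R(N)$'' into the non-vanishing of suitable $\Hom$-modules, and then exploit the Hom-tensor interchange that flatness provides when the first argument is finitely presented.

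First, I would observe that for a local ring $(R,\m)$ and any $R$-module $N$, one has $\m \in \sK_R(N)$ if and only if $\Hom_R(R/I, N) \neq 0$ for every finitely generated ideal $I \subseteq \m$. Indeed, an element $z \in N$ satisfies $I \subseteq \ann(z) \subseteq \m$ precisely when $Iz = 0$ and $z \neq 0$: the first inequality is direct, and for the second note that $\ann(z)$ is proper for any $z \neq 0$, so it is automatically contained in the unique maximal ideal $\m$. Such a $z$ is exactly a nonzero homomorphism $R/I \to N$.

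Since $I$ is finitely generated, $R/I$ is finitely presented, so flatness of $M$ supplies the natural isomorphism
\[
\Hom_R(R/I, L) \otimes_R M \;\cong\; \Hom_R(R/I, L \otimes_R M).
\]
The forward direction then falls out immediately: if $\m \in \sK_R(L \otimes_R M)$ and $I \subseteq \m$ is finitely generated, the right-hand side is nonzero, so the left-hand side is nonzero, and hence $\Hom_R(R/I, L) \neq 0$ (as tensoring a zero module with $M$ yields zero). By the first step, this gives $\m \in \sK_R(L)$.

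For the reverse implication under the faithful flatness hypothesis, I would run the same computation backwards: if $\m \in \sK_R(L)$, then $\Hom_R(R/I, L) \neq 0$ for every finitely generated $I \subseteq \m$, and faithful flatness keeps this nonzero after tensoring with $M$; the Hom-tensor isomorphism then yields $\Hom_R(R/I, L \otimes_R M) \neq 0$, whence $\m \in \sK_R(L \otimes_R M)$. The only nontrivial input is the Hom-tensor interchange, which is available here precisely because the finite generation of $I$ forces $R/I$ to be finitely presented; I do not anticipate any real obstacle beyond correctly setting up these reformulations.
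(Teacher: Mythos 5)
Your proposal is correct and matches the paper's argument essentially step for step: both reformulate $\m \in \sK_R(-)$ as non-vanishing of $\Hom_R(R/I, -)$ for finitely generated proper $I$, then invoke the canonical isomorphism $\Hom_R(R/I, L) \otimes_R M \cong \Hom_R(R/I, L \otimes_R M)$ (valid because $R/I$ is finitely presented and $M$ is flat), and conclude via flatness in one direction and faithful flatness in the other.
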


\begin{proof}
First recall that for $(R,\m)$ local, if $N$ is any $R$-module, then $\m \in \sK_RN$ if and only if for every finitely generated proper ideal $I$ of $R$, $\Hom_R(R/I, N) \neq 0$:  If $0 \neq g \in \Hom_R(R/I,N)$, then $I \subseteq \ann_R(g(\bar1)) \subseteq \m$.  If conversely $I \subseteq \ann_Rz \subseteq \m$ for some $z\in N$, then $g: R/I \ra N$ given by $g(\bar{a}) := az$ is $R$-linear and nonzero.

Let $I$ be a finitely generated proper ideal of $R$.  Then since $R/I$ is a finitely presented $R$-module and $M$ is flat, a standard result of homological algebra shows that \[
\Hom_R(R/I, L \otimes_RM) \cong \Hom_R(R/I, L) \otimes_R M.
\]
But if the latter module is nonzero, then certainly $\Hom_R(R/I, L) \neq 0$.  This proves the first statement.  For the second statement, note that if $M$ is faithfully flat and $\Hom_R(R/I, L) \neq 0$, then this module tensored with $M$ is also nonzero.  Then the displayed isomorphism completes the proof.
\end{proof}

We are now ready to generalize Theorem~\ref{thm:EYflat} to the non-Noetherian case.

\begin{thm}\label{thm:flat}
Let $R$ be a commutative ring, let $Q$ be the total quotient ring of $R$, and let $M$ be an $R$-module.  If $M \otimes_R Q$ is a flat $Q$-module, then the following are equivalent: \begin{enumerate}[label=(\alph*)]
\item\label{it:EYflat} $M$ is flat.
\item\label{it:EYass} $\sK_R(L \otimes_R M) \subseteq \sK_RL$ for any $R$-module $L$,
\item\label{it:EYtf} $L \otimes_R M$ is torsion-free for every torsion-free $R$-module $L$.
\item\label{it:EYspec} $I \otimes_R M$ is torsion-free for every finitely generated ideal $I$ of $R$.
\item\label{it:EYtor} $\Tor_1^R(R/I, M)$ is torsion-free for every finitely generated ideal $I$ of $R$.
\end{enumerate}

In any case, the following are equivalent: \begin{enumerate}[label=(\roman*)]
\item\label{it:EYff} $M$ is faithfully flat.
\item\label{it:EYffass} $M \otimes_R Q$ is flat over $Q$, and $\sK_R(L \otimes_R M) = \sK_RL$ for every $R$-module $L$.
\item\label{it:EYsimple} $M$ is flat and $\sK_R(L \otimes_R M) = \sK_RL$ for every \emph{simple} $R$-module $L$.
\end{enumerate}
\end{thm}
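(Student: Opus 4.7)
The plan is to prove the first block by closing the cycle (a)$\Rightarrow$(b)$\Rightarrow$(c)$\Rightarrow$(d)$\Rightarrow$(e)$\Rightarrow$(a). For (a)$\Rightarrow$(b), any $\p \in \sK_R(L\otimes_R M)$ localizes to $\p R_\p \in \sK_{R_\p}(L_\p \otimes_{R_\p} M_\p)$ by Proposition~\ref{pr:omnibus}(5); since $M_\p$ remains flat over $R_\p$, the forward direction of Lemma~\ref{lem:localcase} gives $\p R_\p \in \sK_{R_\p}L_\p$, and delocalizing yields $\p \in \sK_R L$. The implications (c)$\Rightarrow$(d) and (d)$\Rightarrow$(e) are formal: a finitely generated ideal $I$ is torsion-free as a submodule of the torsion-free $R$-module $R$ (by exactness of localization), and $\Tor_1^R(R/I,M)$ sits as a submodule of $I\otimes_R M$ via the long exact sequence coming from $0\to I\to R\to R/I\to 0$.

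The step (e)$\Rightarrow$(a) is where the hypothesis on $M\otimes_R Q$ pays off: by flat base change for $\Tor$, $\Tor_1^R(R/I,M)\otimes_R Q \cong \Tor_1^Q(Q/IQ,M\otimes_R Q) = 0$, while (e) makes this $\Tor$ torsion-free; hence the injective map into its $Q$-localization forces $\Tor_1^R(R/I,M)=0$ for every finitely generated $I$, yielding (a). The least mechanical step is (b)$\Rightarrow$(c). Assuming $L$ is torsion-free, suppose $L\otimes_R M$ contains a nonzero torsion element $z$, killed by some nonzerodivisor $s$ of $R$, and pick $\p$ minimal over $\ann z$. Then $\p \in \wAss(L\otimes_R M)\subseteq \sK(L\otimes_R M)$, so (b) gives $\p\in \sK_RL$; applying the strong Krull condition to the finitely generated subideal $(s)\subseteq\p$ produces a nonzero $y\in L$ with $sy=0$, contradicting the torsion-freeness of $L$.

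For the faithful flatness block, (i)$\Rightarrow$(ii) combines three ingredients: $M$ faithfully flat forces $M\otimes_R Q$ flat over $Q$; (a)$\Rightarrow$(b) gives $\sK_R(L\otimes_R M)\subseteq \sK_RL$; and the reverse containment is obtained by localizing at any $\p\in\sK_RL$ (noting that $M_\p$ remains faithfully flat over $R_\p$ because $M\otimes_R\kappa(\p)\neq 0$ by faithful flatness) and invoking the faithfully-flat direction of Lemma~\ref{lem:localcase}. The implication (ii)$\Rightarrow$(iii) uses the $M\otimes_R Q$-flatness built into (ii) to invoke (b)$\Rightarrow$(a) of the first block, plus restriction of the equality to simple modules. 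Finally, for (iii)$\Rightarrow$(i), apply the equality hypothesis to $L=R/\m$ for each maximal ideal $\m$: by Proposition~\ref{pr:omnibus}(1), $\sK(R/\m)=\{\m\}$, so $\sK(M/\m M)=\{\m\}\neq\emptyset$, whence $M/\m M\neq 0$ by Proposition~\ref{pr:omnibus}(4), establishing faithful flatness.
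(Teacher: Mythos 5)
Your proof is correct and follows essentially the same route as the paper's: (a)$\Rightarrow$(b) via localization and Lemma~\ref{lem:localcase}, (b)$\Rightarrow$(c) by pushing a minimal prime over an annihilator through the containment of strong Krull primes, and the faithful-flatness cycle using the same base-change and simple-module arguments. The only difference is that the paper cites the corresponding steps of \cite[Theorem 2.2]{nmeYao-flat} for (c)$\Rightarrow$(d)$\Rightarrow$(e)$\Rightarrow$(a), whereas you spell them out (correctly) via the $\Tor$ long exact sequence and flat base change to $Q$.
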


\begin{proof}

First we prove (\ref{it:EYflat}) $\implies$ (\ref{it:EYass}).  Let $M$ be flat and $\p \in \sK_R(L \otimes_R M)$.  Then $M_\p$ is $R_\p$-flat and $\p R_\p \in \sK_{R_\p}(L_\p \otimes_{R_\p} M_\p)$, so by Lemma~\ref{lem:localcase}, we have $\p R_\p \in \sK_{R_\p} (L_\p)$, whence $\p \in \sK_RL$.

To see that (\ref{it:EYass}) $\implies$ (\ref{it:EYtf}), we assume (\ref{it:EYass}) and  prove the contrapositive of (\ref{it:EYtf}).  So suppose that $N := L \otimes_R M$ is not a torsion-free module.  Then there is some $R$-regular element $a$ and $0 \neq z \in N$ such that $az = 0$.  That is, $a \in \ann_Rz$.  Let $\p$ be a prime ideal minimal over $\ann z$.  Then $\p \in \wAss_R N \subseteq \sK_R N \subseteq \sK_R L$.  In particular, $a \in \p$ is a zero-divisor on $L$ (since every strong Krull prime of a module consists of zero-divisors on it), whence $L$ is not torsion-free.

The proofs of (\ref{it:EYtf}) $\implies$ (\ref{it:EYspec}) $\implies$ (\ref{it:EYtor}) may be copied over from the proof of \cite[Theorem 2.2]{nmeYao-flat} with barely any changes (replacing $P$ with $I$, of course).

To see that (\ref{it:EYtor}) $\implies$ (\ref{it:EYflat}), we may again follow the proof of the corresponding implication in the proof of \cite[Theorem 2.2]{nmeYao-flat}, this time noting the fact \cite[Theorem 7.8]{Mats} that the $R$-module $M$ is flat iff $\Tor_1^R(R/I,M)=0$ for every finitely generated ideal $I$ of $R$.

To see that (\ref{it:EYff}) $\implies$ (\ref{it:EYffass}), assume that $M$ is faithfully flat over $R$.  First note that standard base change arguments show that $M \otimes_R Q$ is flat over $Q$ and $M_\p$ is faithfully flat over $R_\p$ for all $\p \in \Spec R$.  Take any $\p \in \Spec R$.  Then \begin{align*}
\p \in \sK_RL  &\iff \p R_\p \in \sK_{R_\p} L_\p &\text{by Prop.~\ref{pr:omnibus} (6)}\\
&\iff \p R_\p \in \sK_{R_\p} (L_\p \otimes_{R_\p} M_\p) &\text{by Lemma~\ref{lem:localcase}}\\
&\iff \p \in \sK_R (L \otimes_R M) &\text{again by Prop.~\ref{pr:omnibus} (6)}.
\end{align*}

Then (\ref{it:EYffass}) $\implies$ (\ref{it:EYsimple})  because (\ref{it:EYass}) $\implies$ (\ref{it:EYflat}) when $M \otimes_RQ$ is flat over $Q$.

Finally, the proof that (\ref{it:EYsimple}) $\implies$ (\ref{it:EYff}) is the same as the corresponding implication in \cite[Theorem 2.2]{nmeYao-flat}, replacing $\Ass$ everywhere with $\sK$, and using the fact that for any $R$-module $N$, $N = 0 \iff \sK_R N = \emptyset$.
\end{proof}

\begin{rmk}
In addition to being a generalization of Theorem~\ref{thm:EYflat}, the above theorem is also a vast expansion of a result of Iroz and Rush.  Namely, if one combines Proposition 2.1 with Theorem 2.2 from \cite{IrRu-ass}, one obtains the following: Let $R$ be a commutative ring, $S$ a commutative flat $R$-algebra, and $L$ an $R$-module.  Then $\sK_R(L \otimes_R S) \subseteq \sK_RL$, with equality if $S$ is faithfully flat over $R$.

This represents, in the special case where $S=M$, the implications (\ref{it:EYflat}) $\implies$ (\ref{it:EYass}) and (\ref{it:EYff}) $\implies$ (\ref{it:EYffass}) of our theorem.
\end{rmk}

\begin{example}\label{ex:EYfg}
One might hope to be able to replace the finitely generated ideals in the implication (e) $\implies$ (a) with \emph{prime} ideals (i.e. transport Theorem~\ref{thm:EYflat} to the general case with no changes at all).  However, this cannot work in general.

Let $(R,\m,k)$ be any local ring such that $R \neq k$ and such that every prime ideal is idempotent, i.e. $P=P^2$ for all $P \in \Spec R$.  (For example, one can take $R$ to be a valuation domain with value group $\Q \oplus \cdots \oplus \Q$, any fixed number of copies, with lexicographic order.)  Let $M := R/\m$.  Let $P$ be any prime ideal of $R$.  Then \[
P \otimes_R M = P^2 \otimes_R R/\m \cong P \otimes_R P(R/\m) = P \otimes_R 0 = 0.
\]
But $\Tor_1^R(R/P, M)$ is a submodule of $P \otimes_R M=0$, hence $\Tor_1^R(R/P, M)=0$ as well, and $0$ is a torsion-free $R$-module.  Thus, condition (e) of Theorem~\ref{thm:EYflat} is satisfied.

However, $M$ cannot be flat over $R$.  To see this, suppose $M=k$ were flat over $R$.  Since $k \otimes_R \m$ is the kernel of the identity map $k=k \otimes_R R \ra k \otimes_R k=k$ (by flatness of $k$ applied to the short exact sequence $0 \ra \m \ra R \ra k \ra 0$), we have $k \otimes_R \m=0$.  On the other hand, for any nonunit $x\in R$, we have an injective  map $k \otimes_R(x)  \ra k \otimes_R \m$ (since $k$ is flat), whence $k \otimes_R (x) = (x)/(x)\m = 0$.  Then by the Nakayama lemma, $(x)=0$.  That is, all nonzero elements are units, whence $R$ is a field, contradicting our original assumption.  Thus, condition (a) of Theorem~\ref{thm:EYflat} fails.
\end{example}

We use the following convention: if $\phi: R \ra S$ is a ring homomorphism, we let $\phi^*: \Spec S \ra \Spec R$ be the corresponding map of prime spectra.

\begin{example}\label{ex:EYwAss}
The result of Theorem~\ref{thm:flat} does \emph{not} hold if $\sK$ is replaced by $\wAss$.  Indeed, the implication (a) $\implies$ (b) fails.

Heinzer and Ohm \cite[Example 4.4]{HeiO-locN} exhibit a flat extension $\phi: R \ra S$ of integral domains and an element $a \in R$ such that $\phi^*(\wAss_S(S/aS)) \nsubseteq \wAss_R(R/aR)$.  On the other hand, Lazard \cite[Proposition 3.1]{Laz-plat} shows that whenever $\phi:R \ra S$ is a flat ring map, $\wAss_RN = \phi^*(\wAss_SN)$ for any $S$-module $N$.  Hence, we have \[
\wAss_R((R/aR) \otimes_R S) = \wAss_R(S/aS) = \phi^*(\wAss_S(S/aS)) \nsubseteq \wAss_R(R/aR),
\]
providing failure of the implication (\ref{it:EYflat}) $\implies$ (\ref{it:EYass}) when $L=R/aR$ and $M=S$.
\end{example}

\section{Strong Krull primes and change of rings}\label{sec:bc}
 Let $\varphi: R\to S$ be a homomorphism of Noetherian rings.  Let $L$ be an $R$-module and $M$ an $S$-module that is flat as an $R$-module.   Then a useful theorem states that
  \begin{equation}\label{eq:a}
\Ass_S(L\otimes_R M) =\bigcup_{\p\in \Ass_R (L)}\Ass_S(M/\p M).
\end{equation}
  (see for example \cite[Theorem 23.2]{Mats}).
  
  In this section, we consider this result for non-Noetherian rings with strong Krull primes replacing associated primes.  That is, when $\varphi:R \ra S$ is a homomorphism of (not necessarily Noetherian) rings, $L$ is an $R$-module, and $M$ is an $R$-flat $S$-module, we ask: When does the equality \begin{equation}\label{eq:b}
  \sK_S(L \otimes_R M) = \bigcup_{\p \in \sK_RL} \sK_S(M/\p M)
  \end{equation}
  hold?  In the  general setting we prove the containment ``$\supseteq$''.  That is, $ \sK_S(L\otimes_R M) \supseteq \bigcup_{\p\in \sK_R (L)}\sK_S(M/\p M)$.  However, we also give an example to show that  equality need not hold.  We then show that the result holds if only $R$ is assumed to be Noetherian, so long as either $L$ is a finitely generated $R$-module or the ring map $R \ra S$ satisfies INC.  It is known that equality holds if $S =R[x]$ and $M=S$ \cite[Theorem 2.5]{IrRu-ass}.  Generalizing that result somewhat, we show that if $S$ is a \emph{content algebra} over $R$ (a notion recalled in Definition~\ref{def:content}), then the result holds if $M=S$ and $L$ is a cyclic $R$-module.   Finally, we show that if Krull primes are used instead of strong Krull primes, containment can fail in \emph{either} direction.

\begin{thm}\label{thm:oneway}
Let $\varphi: R \ra S$ be a ring homomorphism, $L$ an $R$-module, and $M$ an $R$-flat $S$-module.  Then \[
\sK_S(L \otimes_R M) \supseteq \bigcup_{\p \in \sK_RL} \sK_S(M / \p M).
\]
\end{thm}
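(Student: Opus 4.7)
The plan is to fix $\p \in \sK_R L$ and $\q \in \sK_S(M/\p M)$, and then for an arbitrary finitely generated ideal $J \subseteq \q$, produce a single simple tensor $w = \ell \otimes m \in L \otimes_R M$ witnessing $J \subseteq \ann_S(w) \subseteq \q$. The element $m$ will come from the strong-Krull property at $\q$ applied to $M/\p M$, the element $\ell$ from the strong-Krull property at $\p$ applied to $L$, and the two choices will be linked by an auxiliary finitely generated subideal of $\p$ built from the equations expressing $Jm \subseteq \p M$.

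First, since $\q \in \sK_S(M/\p M)$ and $J \subseteq \q$ is finitely generated, choose $m \in M$ with $Jm \subseteq \p M$ and $(\p M :_S m) \subseteq \q$. Writing each $j_i m = \sum_k a_{ik} m_{ik}$ with $a_{ik} \in \p$ for generators $j_1,\ldots,j_n$ of $J$, let $I_0 \subseteq \p$ be the finitely generated ideal generated by the finitely many $a_{ik}$; then $Jm \subseteq I_0 M$. Now invoke $\p \in \sK_R L$ on the finitely generated ideal $I_0 \subseteq \p$ to choose $\ell \in L$ with $I_0 \ell = 0$ and $\ann_R(\ell) \subseteq \p$, and set $w := \ell \otimes m$. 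The containment $J \subseteq \ann_S(w)$ is then immediate: each $j_i w = \ell \otimes j_i m$ is a sum of terms $\ell \otimes a_{ik} m_{ik} = (a_{ik}\ell) \otimes m_{ik} = 0$.

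The nontrivial step is $\ann_S(w) \subseteq \q$, and this is where the $R$-flatness of $M$ is essential. Suppose $sw = 0$, so $\ell \otimes sm = 0$ in $L \otimes_R M$. Applying $- \otimes_R M$ to the inclusion $R\ell \hookrightarrow L$ of cyclic submodules yields (by flatness) an injection $R\ell \otimes_R M \hookrightarrow L \otimes_R M$, so already $\ell \otimes sm = 0$ in $R\ell \otimes_R M$. The standard isomorphism $R\ell \otimes_R M \cong M/(\ann_R \ell)M$ translates this to $sm \in (\ann_R \ell)M \subseteq \p M$, using $\ann_R(\ell) \subseteq \p$. By the defining property of $m$, this forces $s \in \q$.

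The main obstacle is precisely this last step: without flatness, $\ell \otimes sm = 0$ in $L \otimes_R M$ yields no control over $sm$ in $M$ itself. The whole argument turns on the reduction to the cyclic submodule $R\ell \cong R/\ann_R(\ell)$, where flatness of $M$ converts tensor-vanishing into honest membership in $(\ann_R \ell)M$, which then chains into $\p M$ and thence into $\q$ via our choice of $m$.
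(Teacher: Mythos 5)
Your proof is correct and follows essentially the same path as the paper's own argument: you pick $m$ via the strong Krull property of $\q$ on $M/\p M$, extract a finitely generated ideal $I_0 \subseteq \p$ of ``coefficients'' from $Jm \subseteq \p M$, invoke the strong Krull property of $\p$ on $L$ to get $\ell$, and then use flatness of $M$ to control the annihilator of $\ell \otimes m$. The only cosmetic difference is that the paper proves the single identity $\left((\ann_R u)M :_S t\right) = \ann_S(u \otimes t)$ by tensoring the sequence $0 \ra (\ann_R u) \ra R \ra L$ with $M$, whereas you split the same flatness argument into the injection $R\ell \otimes_R M \hookrightarrow L \otimes_R M$ together with the identification $R\ell \otimes_R M \cong M/(\ann_R \ell)M$; these are the same computation.
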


\begin{proof}
Let $\p \in \sK_RL$ and $Q \in \sK_S(M/ \p M)$.  Let $J$ be a finitely generated ideal of $S$ contained in $Q$.  Then since $Q \in \sK_S(M/\p M)$, there is some $t \in M$ such that \[
J \subseteq (\p M :_S t) \subseteq Q.
\]
Then $J t$ is a finitely generated $S$-submodule of $\p M$. That is, we may write $Jt = \sum_{j=1}^n S a_j$, where each $a_j \in \p M$.  In particular, there are elements $m_i$ of $M$ and $p_{ij}$ of $\p$ such that \[
a_j = \sum_{i=1}^\ell p_{ij} m_i
\]
for each $1\leq j \leq n$.  Let $I := (p_{ij})$ be the ideal of $R$ generated by all the $p_{ij}$s.  Since $I$ is a finitely generated ideal contained in $\p$ and $\p \in \sK_RL$, there is some $u\in L$ such that \[
I \subseteq (0:_Ru) \subseteq \p.
\]
Moreover, $Jt \subseteq IM$.  So we have \[
Jt \subseteq IM \subseteq (0 :_R u)M \subseteq \p M.
\]
Taking the colon with $t$, we get \begin{equation}\label{eq:x}
J \subseteq (((0 :_R u)M) :_S t) \subseteq (\p M :_S t) \subseteq Q.
\end{equation}

\noindent \textbf{Claim:} $(((0 :_R u)M) :_S t) = (0 :_S (u \otimes t))$, where $u \otimes t$ is considered as an element of the $S$-module $L \otimes_R M$.

\vspace{3pt}

\noindent \textit{Proof of the claim.} 
Consider the following exact sequence of $R$-modules: \[
0 \ra (0 :_R u) \ra R \arrow{\cdot u} L.
\]
Since $M$ is flat over $R$, we may tensor over $R$ with $M$ to get the following exact sequence of $S$-modules: \[
0 \ra (0 :_R u)M \ra M \arrow{(u \otimes -)} L \otimes_R M.
\]
But the kernel $K$ of the rightmost map is the set of all $z \in M$ such that $u \otimes z = 0$ in $L \otimes_R M$.  So \[
(((0 :_R u)M) :_S t) = (K :_S t) = \{s \in S \mid (u \otimes st) = 0\} = (0 :_S (u \otimes t)),
\]
since $S$ acts on $L \otimes_R M$ via the second factor of the tensor product, finishing the proof of the claim.

\vspace{3pt}

Then (\ref{eq:x}) together with the claim above yields \[
J \subseteq (0 :_S (u \otimes t)) \subseteq Q,
\]
which finally implies that $Q \in \sK_R(L \otimes_R M)$, as was to be shown.
\end{proof}

We next present an example that shows that (\ref{eq:b}) can fail even when $L$ is cyclic and $M=S$.  (In another variation, Example~\ref{ex:onewaysamering} will show failure when $L=R=S$.)

\begin{example}\label{ex:onewayonly}
In this example we construct a ring extension $R\subset S$, with $S$  flat over $R$ (i.e.,  in (\ref{eq:b}), $S=M$), and  a cyclic $R$-module $L$  such that the containment in Theorem \ref{thm:oneway} is strict.
Specifically we will construct two valuation rings $R\subset S$ with value groups $\Q$ and $\Q \oplus \Q$ (with lex order), and valuations $\nu_1$ and $\nu_2$ respectively, such that for $a\in R$ we have $\nu_2(a) = (\nu_1(a),0)$.  Assuming that we have constructed such rings, let $\p$ denote the unique nonzero prime ideal of $R$ and $\q\subset \m$ the two nonzero prime ideals of $S$.   We claim that $\p S = \q$.  To see this, first note that clearly $\p S \subseteq \q$, since the first component of the value of any element in $\p S$ must be positive.  Conversely, for any $b \in \q$, one has $\nu_2(b) = (m,n)$ for some positive rational number $m$ and some $n \in \Q$.  Then choose $a\in \p$ with $\nu_1(a) = m/2$.  It follows that $\nu_2(b/a) = (m/2, n)$.  Hence $b/a \in \q$, and so $b = a (b/a) \in \p \q \subseteq \p S$.

Let $0\neq a\in \p$.  Then $\sK_R(R/aR)=\{\p\}$.   Note that since $R$ is a valuation ring and $S$ is torsion free over $R$, $S$ is flat over $R$.  Also, $R/aR\otimes_R S\cong S/aS$.  Similar to the construction in Example~\ref{ex:nonmax}, we can show that sK$_S(S/aS) =\{\q,\m\}$.   On the other hand, since $\p S =\q$, we have $\bigcup_{P \in \sK_R(R/aR)} \sK_S(S/PS)= \sK_S(S/\q) =\{\q\}$.

Now we construct $R$ and $S$.  Fix a field $K$.  Then the embedding of ordered groups  $\Q\to \Q\oplus \Q$, via $q\mapsto (q,0)$, induces an embedding of $F_1\hookrightarrow F_2$, where $F_1$ and $F_2$ are the quotient fields of the group rings $K[\Q]$ and $K[\Q\oplus \Q]$ respectively.  Now if $\nu_1$ and $\nu_2$ denote the natural valuations from $F_1$ and $F_2$ to $\Q$ and $\Q\oplus \Q$ respectively, the associated valuation rings $R$ and $S$ satisfy the required assumptions.
\end{example}

\begin{rmk}\label{rmk:Kfail}
It is clear from the definitions that if $A$ is a valuation ring (or any B\'ezout domain) then for any $A$-module $L$, one has $\K_A(L) = \sK_A(L)$.  Hence, the above example provides a situation where the containment ``$\subseteq$'' fails in Theorem~\ref{thm:oneway} when strong Krull primes are replaced with Krull primes.  Later, we will see that the containment ``$\supseteq$'' \emph{also} fails for Krull primes (cf. Example~\ref{ex:Kfail}).
\end{rmk}

We now present some special cases where (\ref{eq:b}) \emph{does} hold. First, we recall the following result of Iroz and Rush:

\begin{prop}\cite[Proposition 2.1]{IrRu-ass}\label{pr:pullback}
Let $\varphi: R \ra S$ be a ring homomorphism and let $N$ be an $S$-module.  Then \[
\varphi^*(\sK_SN) = \sK_RN.
\]
\end{prop}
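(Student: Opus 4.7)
The proposition asserts an equality of sets, which I would prove as two containments.

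The forward containment $\varphi^*(\sK_S N) \subseteq \sK_R N$ is immediate. Given $Q \in \sK_S N$ and setting $\p := \varphi^{-1}(Q)$, any finitely generated $I \subseteq \p$ extends to a finitely generated $\varphi(I)S \subseteq Q$, so the $\sK_S$-condition yields $z \in N$ with $\varphi(I)S \subseteq \ann_S(z) \subseteq Q$; applying $\varphi^{-1}$ and using $\ann_R(z) = \varphi^{-1}(\ann_S(z))$ gives $I \subseteq \ann_R(z) \subseteq \p$, showing $\p \in \sK_R N$.

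For the reverse containment, given $\p \in \sK_R N$, I would construct $Q \in \sK_S N$ with $\varphi^{-1}(Q) = \p$. First I reduce to the local case using the generalized local property (Proposition~\ref{pr:omnibus}(6)) applied simultaneously at $R \setminus \p$ in $R$ and $W := \varphi(R \setminus \p)$ in $S$; one may then assume $R$ is local with maximal ideal $\p$ and $W \subseteq S^\times$. Under this assumption every prime of $S$ automatically contracts into $\p$, so it suffices to produce $Q \in \sK_S N$ with $Q \supseteq \varphi(\p)S$. A preliminary observation verifies $\varphi(\p)S \ne S$: if $1 = \sum \varphi(p_i)s_i$ with $p_i \in \p$, applying the $\sK_R$-condition to $I = (p_1, \ldots, p_n)$ gives a nonzero $z \in N$ with $Iz = 0$, whence $z = \sum s_i(p_i z) = 0$, a contradiction.

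The core construction is a Zorn's lemma argument applied to the collection $\Omega$ of ideals $\ia \subseteq S$ containing $\varphi(\p)S$ such that for every finitely generated $J \subseteq \ia$ there exists $z \in N$ with $J \subseteq \ann_S(z)$ and $\ann_R(z) \subseteq \p$. The ideal $\varphi(\p)S$ itself lies in $\Omega$: a finitely generated $J \subseteq \varphi(\p)S$ is contained in $\varphi(I)S$ for some finitely generated $I \subseteq \p$, and the $\sK_R$-condition supplies the desired $z$. Chain closure of $\Omega$ is immediate from the finite-generatedness. Zorn's lemma then produces a maximal element $Q \in \Omega$.

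The main technical obstacle is to show that this maximal $Q$ is both prime and a genuine element of $\sK_S N$. For primality, suppose $a, b \in S \setminus Q$ with $ab \in Q$. For each finitely generated $J_0 \subseteq Q$, apply the $\Omega$-condition to $J_0 + (ab)$ to get a witness $z$ with $(J_0, ab) \subseteq \ann_S(z)$ and $\ann_R(z) \subseteq \p$; then either $bz \ne 0$, in which case $z' := bz$ witnesses $J_0 + (a) \subseteq \ann_S(z')$ with $\ann_R(z') \subseteq \p$, or $bz = 0$, in which case $z$ itself witnesses $J_0 + (b) \subseteq \ann_S(z)$. Since each of these conditions is downward-closed in the directed lattice of finitely generated subideals of $Q$ and together they cover all such $J_0$, a cofinality argument forces one of $(Q, a)$ or $(Q, b)$ to lie in $\Omega$, contradicting maximality. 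The final ingredient---strengthening the witness property to the genuine bound $\ann_S(z) \subseteq Q$ required for $Q \in \sK_S N$---must be verified by a further maximality argument against ideals of the form $Q + \ann_S(z)$, and this subtle step is the most delicate part of the argument, handled carefully in Iroz--Rush.
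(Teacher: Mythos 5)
The paper itself does not supply a proof of this proposition; it cites Iroz--Rush \cite{IrRu-ass}, so there is no ``paper's own proof'' to compare against. Evaluating your argument on its own: the forward containment is correct, the local reduction (localizing $R$ at $\p$ and $S$ at $\varphi(R\setminus\p)$ via Proposition~\ref{pr:omnibus}(6)) is correct, and the Zorn's lemma setup is sound --- $\varphi(\p)S\in\Omega$, $\Omega$ is chain-complete, and your dichotomy argument does show a maximal element $Q$ is prime, since both sets of finitely generated subideals are downward-closed in a directed poset, so one must be everything.

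The gap is in the last step, and the fix you gesture at does not work as stated. In the local situation your $\Omega$-condition on an ideal $\ia$ amounts to: $N_J := \{z \in N : Jz = 0\} \neq 0$ for every finitely generated $J \subseteq \ia$. This is strictly weaker than the condition $Q \in \sK_S N$, which in addition requires some nonzero $z \in N_J$ with $\ann_S z \subseteq Q$, equivalently $(N_J)_Q \neq 0$. Suppose that fails for some finitely generated $J \subseteq Q$: then for every nonzero $z \in N_J$ there is some $a_z \in \ann_S z \setminus Q$. Passing to $Q + \ann_S z$ (or $Q + (a_z)$) for a \emph{single} such $z$ gives no contradiction --- maximality of $Q$ only tells you these ideals are not in $\Omega$, which you already knew since $\ann_S z \nsubseteq Q$. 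What you would need is a single element $a \notin Q$ with $Q + (a) \in \Omega$, i.e.\ with $N_{J'+(a)} \neq 0$ for every finitely generated $J' \subseteq Q$; but the witnesses $a_z$ vary with $z$ (and with $J'$), and there is no evident way to choose a uniform one. So it is not at all clear that a maximal element of your $\Omega$ lies in $\sK_S N$, and the reverse containment $\sK_R N \subseteq \varphi^*(\sK_S N)$ remains unproven. This is the heart of the proposition, and it is precisely what needs a real argument rather than a pointer; either a genuinely different construction or a substantially more careful exploitation of primality and maximality is required.
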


Then note the following:
\begin{lemma}
\label{lem:MpM}
Let $R$ be a commutative ring, $M$ a flat $R$-module, and $\p \in \Spec R$.  If $M/\p M \neq 0$, then $\sK_R(M/\p M) = \Ass_R(M/\p M) = \{\p\}$.
\end{lemma}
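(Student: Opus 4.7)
The plan is to reduce the computation to the domain $R/\p$ via the base change $\phi : R \onto R/\p$ and then exploit torsion-freeness.

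First I would observe that $M/\p M \cong M \otimes_R R/\p$ is flat over $R/\p$, since flatness is preserved under base change. Because $R/\p$ is a domain, a flat $R/\p$-module is torsion-free: the injection $0 \to R/\p \xrightarrow{\cdot \bar a} R/\p$ tensored with $M/\p M$ stays injective for any nonzero $\bar a \in R/\p$. Thus every nonzero element of $R/\p$ is a nonzerodivisor on $M/\p M$.

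Next I would compute $\sK_{R/\p}(M/\p M)$ directly. The nonvanishing hypothesis $M/\p M \neq 0$ combined with torsion-freeness says that every nonzero $z \in M/\p M$ has $\ann_{R/\p}(z) = 0$: for if $\bar a \in \ann_{R/\p}(z)$ with $\bar a \neq 0$, then $\bar a$ would be a nonzerodivisor killing $z$, forcing $z = 0$. In particular, for the zero ideal $I = 0 \subseteq (0)$ one gets $0 \subseteq \ann_{R/\p}(z) \subseteq 0$ for any nonzero $z$, so $(0) \in \sK_{R/\p}(M/\p M)$; simultaneously, the same calculation shows this is the only element, since any strong Krull prime $\bar Q$ of $M/\p M$ must consist of zero-divisors on $M/\p M$ (take $I = (\bar a)$ for $\bar a \in \bar Q$), and over the domain $R/\p$ the only zero-divisor is $0$. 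Thus $\sK_{R/\p}(M/\p M) = \{(0)\}$.

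Applying Proposition~\ref{pr:pullback} to $\phi : R \to R/\p$ then yields
\[
\sK_R(M/\p M) = \phi^*\bigl(\sK_{R/\p}(M/\p M)\bigr) = \phi^*(\{(0)\}) = \{\p\}.
\]
Finally, for the associated prime statement, I would pick any nonzero $z \in M/\p M$: clearly $\p \subseteq \ann_R(z)$, and conversely any $a \in \ann_R(z)$ pushes down to an element of $R/\p$ annihilating $z$, which by the torsion-free argument above forces $\bar a = 0$, i.e.\ $a \in \p$. So $\p \in \Ass_R(M/\p M)$. Combined with the general containment $\Ass_R \subseteq \sK_R$ and the equality just proved, this gives $\Ass_R(M/\p M) = \{\p\}$. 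There is no real obstacle here beyond correctly invoking Proposition~\ref{pr:pullback} to transfer the computation from $R/\p$ back to $R$; the rest is bookkeeping about torsion-freeness over a domain.
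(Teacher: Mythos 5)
Your proof is correct and takes essentially the same approach as the paper's: both reduce to the observation that $M/\p M$ is flat, hence torsion-free, over the domain $R/\p$, so the annihilator of every nonzero element is exactly $\p$. The only cosmetic difference is that you compute $\sK_{R/\p}(M/\p M) = \{(0)\}$ and then invoke Proposition~\ref{pr:pullback} to pull back to $R$, whereas the paper concludes directly from $\p = \ann_R(\bar z)$ for all nonzero $\bar z$; both routes are sound.
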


\begin{proof}
First note that $M/ \p M \cong (R/\p) \otimes_R M$ is flat as an $(R/\p)$-module via base change to $R/\p$.
Moreover, any flat module over an integral domain is torsion-free, whence for any $0 \neq \bar z \in M/ \p M$, $\p = \ann_R(\bar z)$, which proves the result.
\end{proof}

We now have tools to prove a ``partially Noetherian'' case of equality in (\ref{eq:b}), as follows.

\begin{thm}\label{thm:Noethfg}
Let $\phi: R \ra S$ be a homomorphism of commutative rings.  Let $L$ be an $R$-module and $M$ an $R$-flat $S$-module.  Assume that $R$ is Noetherian and $L$ is finitely generated.
Then \[
\sK_S(L \otimes_R M) = \bigcup_{\p \in \Ass_RL} \sK_S(M/\p M).
\]
\end{thm}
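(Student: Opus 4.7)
The inclusion $\supseteq$ is immediate: Theorem~\ref{thm:oneway} gives $\sK_S(L\otimes_RM)\supseteq\bigcup_{\p\in\sK_RL}\sK_S(M/\p M)$, and since $R$ is Noetherian, $\sK_RL=\Ass_RL$. So the plan is to focus on the reverse containment.

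For $\subseteq$, I will set up a prime filtration of $L$ and use the pullback property of $\sK$ to pin down the relevant associated prime. Since $L$ is finitely generated over the Noetherian ring $R$, fix a chain $0=L_0\subset L_1\subset\cdots\subset L_n=L$ of $R$-submodules with $L_i/L_{i-1}\cong R/\q_i$ for primes $\q_i\in\Spec R$. Flatness of $M$ keeps each inclusion injective after tensoring, so the $S$-submodules $N_i:=L_i\otimes_RM$ form an ascending chain inside $L\otimes_RM$ with $N_i/N_{i-1}\cong M/\q_iM$. Iterating the short-exact-sequence property of strong Krull primes (Proposition~\ref{pr:omnibus}(3)) yields
\[
\sK_S(L\otimes_RM)\subseteq\bigcup_{i=1}^n\sK_S(M/\q_iM).
\]

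Now fix $Q\in\sK_S(L\otimes_RM)$ and choose $i$ with $Q\in\sK_S(M/\q_iM)$; in particular $M/\q_iM\ne 0$. The crucial step is to show $\q_i\in\Ass_RL$, which I will do by computing $\phi^{-1}(Q)$ in two different ways. On one hand, Proposition~\ref{pr:pullback} gives $\phi^{-1}(Q)\in\sK_R(M/\q_iM)$; since $M/\q_iM$ is a nonzero flat $(R/\q_i)$-module (base change of the flat $R$-module $M$), Lemma~\ref{lem:MpM} forces $\phi^{-1}(Q)=\q_i$. On the other hand, $\phi^{-1}(Q)\in\sK_R(L\otimes_RM)$, and since $M$ is $R$-flat, the implication (\ref{it:EYflat})$\implies$(\ref{it:EYass}) of Theorem~\ref{thm:flat} yields $\sK_R(L\otimes_RM)\subseteq\sK_RL=\Ass_RL$. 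Combining these, $\q_i=\phi^{-1}(Q)\in\Ass_RL$, and $Q\in\sK_S(M/\q_iM)$ with $\q_i\in\Ass_RL$, completing the inclusion.

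The apparent obstacle is that a prime filtration may involve non-associated primes, and refining the filtration directly seems awkward. What rescues the argument is the pullback calculation above: Lemma~\ref{lem:MpM} combined with the Iroz--Rush-style containment $\sK_R(L\otimes_RM)\subseteq\sK_RL$ automatically forces the filtration prime $\q_i$ supporting $Q$ to coincide with $\phi^{-1}(Q)$, and to be an associated prime of $L$. This sidesteps any need to tailor the filtration to $\Ass_RL$ and makes the proof essentially a bookkeeping exercise once the preparatory lemmas are in hand.
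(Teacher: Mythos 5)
Your proof is correct, and it takes a genuinely different route from the paper's. The paper adapts Matsumura's argument for the classical Noetherian case: it first uses a primary decomposition $0 = Q_1 \cap \cdots \cap Q_r$ to embed $L$ into a direct sum of $\p$-coprimary modules $C_i$, shows directly (via $\p^n C_i = 0$ and a nonzerodivisor argument) that $\phi^*(P)$ equals the unique associated prime of the relevant $C_i$, and only then takes a prime filtration of $C_i$ and applies the pullback together with Lemma~\ref{lem:MpM}. You bypass the primary decomposition entirely: you take a prime filtration of $L$ at the outset, and then compute $\phi^*(Q)$ two ways, using Lemma~\ref{lem:MpM} to pin it to the filtration prime $\q_i$ and using Theorem~\ref{thm:flat}\,(\ref{it:EYflat})$\implies$(\ref{it:EYass}) (equivalently, the classical fact $\Ass_R(L\otimes_R M)\subseteq\Ass_R L$ for $M$ $R$-flat over Noetherian $R$) to land it in $\Ass_R L$. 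The trade-off is that your argument is shorter and conceptually transparent but imports the Section 3 flatness criterion as a black box, whereas the paper's version is self-contained at this point and does not invoke Theorem~\ref{thm:flat}. Both are valid; yours is a nice demonstration of how the earlier base-change machinery absorbs the role that primary decomposition plays in Matsumura's original argument.
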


\noindent \emph{Note:} Our proof is adapted from the proof of \cite[Theorem 23.2]{Mats}. 

\begin{proof}
By Theorem~\ref{thm:oneway}, we need only show ``$\subseteq$''.  Let $P \in \sK_S(L \otimes_R M)$.  Let \[
0 = Q_1 \cap \cdots \cap Q_r
\]
be a primary decomposition of the submodule $0 \subseteq L$ such that $\Ass L = \cup_i \Ass(L/Q_i)$.  Let $C_i := L/Q_i$.  Then there is an injective $R$-module map $L \hookrightarrow \bigoplus_{i=1}^r C_i$.  Hence, by flatness, we also have an injection \[
L \otimes_R M \hookrightarrow \bigoplus_{i=1}^r (C_i \otimes_R M).
\]
Then since strong Krull primes respect inclusion of modules and satisfy the strong exact sequence property, $P \in \sK_S(C_i \otimes_R M)$ for some $i$.  But by construction, $\Ass_R(C_i) = \{\p\}$ for some $\p \in \Ass_RL$.  We claim that $\phi^*(P) = \p$.

To see this, note that since $C_i$ is finitely generated and $\p$-coprimary, there is some positive integer $n$ with $\p^n C_i = 0$.  Then $\phi(\p^n)\cdot (C_i \otimes M) = 0$ as well, so that $\p^n \subseteq \phi^*(P)$ (since every strong Krull prime of a module contains the annihilator of that module), whence $\p \subseteq \phi^*(P)$.  On the other hand, for any $y \in R \setminus \p$, we get the following exact sequence: \[
0 \ra C_i \arrow{\cdot  y} C_i. \]
Then tensoring with the $R$-flat $S$-module $M$, we get the following exact sequence of $S$-modules: \[
0 \ra C_i \otimes_R M \arrow{\cdot \phi(y)} C_i \otimes_R M.
\]
Since every element of a strong Krull prime of a module is a zero-divisor on that module, we have $\phi(y) \notin P$, whence $y \notin \phi^*(P)$.  This shows that $\phi^*(P) \subseteq \p$, so that finally $\p =\phi^*(P)$, as per the claim.

Now, since $C_i$ is a finitely generated $R$-module, it has a prime filtration.  That is, there is a nested sequence of modules \[
0 = D_0 \subset D_1 \subset \cdots \subset D_k = C_i,
\]
and (not necessarily distinct) prime ideals $\p_1, \dotsc, \p_k \in \Spec R$ such that for each $j$, $D_j / D_{j-1} \cong R/\p_j$.

Tensoring with $M$ and using its flatness, we get $D_{j-1} \otimes_R M \subseteq D_j \otimes_R M$ and \[
(D_j \otimes_R M) / (D_{j-1} \otimes_R M) \cong M / \p_j M
\]
for each $1 \leq j \leq k$.

Hence by the short exact sequence property, \[
P \in \sK_S(C_i \otimes M) \subseteq \bigcup_j \sK_S(M/\p_j M).
\]
That is, there is some $j$ with $P \in \sK_S(M/ \p_j M)$.  But then \[
\p = \phi^*(P) \in \phi^*[\sK_S(M/\p_j M)] \subseteq \{\p_j\},
\]
where the last containment is a result of Lemma~\ref{lem:MpM} and Proposition~\ref{pr:pullback}. Thus, $\p = \p_j$, so $P \in \sK_S (M/ \p M)$ with $\p\in \Ass_RL$, as was to be shown.
\end{proof}

The above result is somewhat limited even in the Noetherian case, in that it assumes that $L$ is finitely generated.  There is, however, more that we can do:

\begin{thm}\label{thm:Noeth}
Let $\phi: R \ra S$ be a ring homomorphism, where $R$ is Noetherian.  Let $L$ be an $R$-module and $M$ an $R$-flat $S$-module.

\begin{enumerate}[label=(\roman*)]

\item Given any $P \in \sK_S(L \otimes_R M)$, there is some $P' \in (\phi^*)^{-1}(\phi^*(P))$ such that $P' \subseteq P$ and $P' \in \bigcup_{\p \in \Ass_RL} \sK_S(M/\p M)$,

\item The  sets $\sK_S(L \otimes_R M)$ and $\bigcup_{\p \in \Ass_RL} \sK_S(M/\p M)$ have the same \emph{minimal elements}, and

\item If $\phi$ further satisfies ``INC'' (that is, for any prime $\p \in \Spec R$, the elements of the set $(\phi^*)^{-1}(\{\p\})$ are pairwise incomparable), then \[
\sK_S(L \otimes_R M) = \bigcup_{\p \in \Ass_RL} \sK_S(M/\p M).
\]
\end{enumerate}
\end{thm}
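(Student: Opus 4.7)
The plan is to prove (i) carefully and then derive (ii) and (iii) as formal consequences, using Theorem~\ref{thm:oneway} and the localization properties of strong Krull primes from Proposition~\ref{pr:omnibus}. Set $\q := \phi^*(P)$. The key observation is that although $L$ may fail to be finitely generated, the ideal $\q$ itself \emph{is} finitely generated since $R$ is Noetherian, so $\phi(\q)S \subseteq P$ is a finitely generated subideal of $P$ that can be plugged in as a test in the definition of $P \in \sK_S(L \otimes_R M)$.

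To begin, I would show $\q \in \Ass_R L$. By Proposition~\ref{pr:pullback} and the Noetherianity of $R$, $\q \in \sK_R(L \otimes_R M) = \Ass_R(L \otimes_R M)$, so $\q = \ann_R(w)$ for some $w \in L \otimes_R M$. Writing $w = \sum_k l_k \otimes m_k$ and letting $L' \subseteq L$ be the finitely generated submodule generated by the $l_k$, flatness of $M$ yields an embedding $L' \otimes_R M \hookrightarrow L \otimes_R M$; in particular $\q \in \Ass_R(L' \otimes_R M)$. Matsumura's classical Theorem 23.2 applied to the identity $R \to R$ (with $L'$ f.g., $M$ flat), together with Lemma~\ref{lem:MpM}, gives $\Ass_R(L' \otimes_R M) \subseteq \Ass_R L'$, so $\q \in \Ass_R L' \subseteq \Ass_R L$.

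Next, plugging $\phi(\q)S$ into the $\sK_S$ definition produces $z \in L \otimes_R M$ with $\phi(\q)S \subseteq \ann_S(z) \subseteq P$, so in particular $\q z = 0$, i.e., $z \in (L \otimes_R M)[\q]$. Flatness of $M$ together with the finite presentation of $R/\q$ (the same tensor-Hom trick used in Lemma~\ref{lem:localcase}) gives a natural $S$-linear isomorphism
\[
(L \otimes_R M)[\q] \;\cong\; \Hom_R(R/\q, L) \otimes_R M \;=\; L[\q] \otimes_R M \;\cong\; L[\q] \otimes_{R/\q} (M/\q M),
\]
the last iso by base change. The resulting nonzero element of $L[\q] \otimes_{R/\q} (M/\q M)$ has $S$-annihilator contained in $P$, so its image in the localization $L[\q] \otimes_{R/\q} (M/\q M)_P$ is nonzero; hence $(M/\q M)_P \neq 0$. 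By Proposition~\ref{pr:omnibus}(4) the set $\sK_{S_P}((M/\q M)_P)$ is nonempty, and Proposition~\ref{pr:omnibus}(6) lifts any of its elements to some $P' \in \sK_S(M/\q M)$ with $P' \subseteq P$. Proposition~\ref{pr:pullback} combined with Lemma~\ref{lem:MpM} forces $\phi^*(P') = \q = \phi^*(P)$, finishing (i).

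Parts (ii) and (iii) now follow formally. Set $A := \sK_S(L \otimes_R M)$ and $B := \bigcup_{\p \in \Ass_R L} \sK_S(M/\p M)$; Theorem~\ref{thm:oneway} gives $B \subseteq A$, while (i) says every $P \in A$ dominates some $P' \in B$. A short order-theoretic check then yields $\min A = \min B$, proving (ii). For (iii), INC forces the $P'$ produced in (i) to coincide with $P$ (since $P' \subseteq P$ with $\phi^*(P') = \phi^*(P)$), so $A \subseteq B$ and hence $A = B$. The main obstacle --- and the crux of the argument --- is the production of the $\q$-torsion witness $z$; the natural alternative of trying to find a single finitely generated $L' \subseteq L$ with $P \in \sK_S(L' \otimes_R M)$ and then invoking Theorem~\ref{thm:Noethfg} directly fails in general, and it is precisely the finite generation of $\q$ that lets one sidestep this.
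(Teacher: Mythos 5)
Your proof is correct, and it takes a genuinely different route from the one in the paper, at least for part~(i). The paper's argument first reduces to the case where $R$ and $S$ are both local with maximal ideals $\q := \phi^*(P)$ and $P$, then finds a finitely generated submodule $L' \subseteq L$ with $\q \in \Ass_R(L'\otimes_R M)$, applies Theorem~\ref{thm:Noethfg} to $L'$ in place of $L$ to produce $\p$ and $P'$, and finally removes the locality hypothesis by relocalizing. You avoid the local reduction of both rings and avoid invoking Theorem~\ref{thm:Noethfg} in its full strength: you instead (a) establish $\q \in \Ass_R L$ directly via a finitely generated $L'$ and the $S=R$ case of Matsumura's formula combined with Lemma~\ref{lem:MpM}, and (b) use the finite generation of $\q$ to test $\phi(\q)S$ against the definition of $P \in \sK_S(L\otimes_RM)$, producing a $\q$-torsion witness, which you then rewrite via the $\Hom$--$\otimes$ isomorphism as a nonzero element of $L[\q]\otimes_{R/\q}(M/\q M)$ surviving localization at $P$; this yields $(M/\q M)_P \neq 0$, and then Proposition~\ref{pr:omnibus}(4),(6), Proposition~\ref{pr:pullback}, and Lemma~\ref{lem:MpM} together hand you the desired $P' \subseteq P$ in $\sK_S(M/\q M)$ with $\phi^*(P')=\q$. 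The derivations of (ii) and (iii) from (i) and Theorem~\ref{thm:oneway} match the paper's (the order-theoretic check that $\min A = \min B$ given $B \subseteq A$ and ``every element of $A$ dominates some element of $B$'' does go through in both directions). The payoff of your route is a somewhat more self-contained argument that makes the role of the finite generation of $\q$ (rather than of $L$) transparent and that sidesteps the prime-filtration machinery of Theorem~\ref{thm:Noethfg}; the paper's route has the advantage of recycling Theorem~\ref{thm:Noethfg} as a black box.
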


\begin{proof}
In (i), let $\q := \phi^*(P)$. For the moment we will assume that $R$ is local with maximal ideal $\q$ and that $S$ is local with maximal ideal $P$.  By Proposition~\ref{pr:pullback}, we have $\q \in \sK_R(L \otimes_R M) = \Ass_R(L \otimes_RM)$ (since $R$ is Noetherian).  Thus, $\q$ is the $R$-annihilator of some element $\eta = \sum_{i=1}^t x_i \otimes y_i \in L \otimes_R M$, with each $x_i \in L$ and $y_i \in M$.  Let $L' := \sum_{i=1}^t R x_i \subseteq L$.  Then $\q$ is the $R$-annihilator of the element $\eta = \sum_{i=1}^t x_i \otimes y_i \in L' \otimes_RM$ (since $M$ is flat over $R$), so $\q \in \Ass_R(L' \otimes_R M) = \phi^*[\sK_S(L' \otimes_R M)]$.  Say $P' \in \sK_S(L' \otimes_R M)$ with $\q = \phi^*(P')$.  Then by Theorem~\ref{thm:Noethfg}, there is some $\p \in \Ass_RL' \subseteq \Ass_RL$ with $P' \in \sK_S(M/\p M)$.  Moreover, since $P$ is the unique maximal ideal of $S$, we have $P' \subseteq P$.

Now we drop the assumption that the rings are local with $\q$, $P$ as their respective maximal ideals.  Let $\psi: R_\q \ra S_P$ be the localized version of $\phi$. Since $P \in \sK_S(L \otimes_R M)$, we have $P S_P \in \sK_{S_P} ((L \otimes_R M)_P)$.  But $(L \otimes_R M)_P = L_\q \otimes_{R_\q} M_P$, so $P S_P \in \sK_{S_P} (L_\q \otimes_{R_\q} M_P)$.  Then the argument in the previous paragraph shows that there is some prime ideal $P' S_P \subseteq P S_P$ with $\psi^*(P S_P) = \psi^*(P' S_P) = \q R_\q$ and an element $\p R_\q \in \Ass_{R_\q} L_\q$ with $P'S_P \in \sK_{S_P}(M_P / \p M_P)$.  But this is the same as saying that there is some $\p \subseteq \q$ with $\p \in \Ass_R L$ and a prime ideal $P' \subseteq P$ with $\phi^*(P)= \phi^*(P') = \q$ such that $P' \in \sK_S (M / \p M)$.  This finishes the proof of (i).

Notice that (ii) follows directly from (i) and Theorem~\ref{thm:oneway}.

If moreover $\phi$ satisfies INC, then let $\q$, $P$, and $P'$ be as in the argument above.   Since $P' \subseteq P$ are members of the same antichain (namely $(\phi^*)^{-1}(\{\q\})$), we have $P = P'$, which (in conjunction with Theorem~\ref{thm:oneway}) shows that (iii) holds.
\end{proof}

One situation where this occurs is the following:  Let $R$ be a Noetherian reduced ring of prime characteristic $p>0$.  Let $R^\infty$ be the \emph{perfect closure} of $R$.  That is, $R^\infty$ results from adjoining unique $p^n$th roots of all the elements of $R$, for all $n\in \N$.  (This was first introduced in \cite{Gr-perfect}; it has proved quite useful in the theory of tight closure, e.g. \cite{HHmain, NoSh.alpha}\footnote{We adhere to the convention that authors be listed in alphabetical order.}.)  Then $R^\infty$ is also reduced.  However, $R^\infty$ is only Noetherian if $R$ is a product of finitely many fields \cite[Theorem 6.3]{NoSh.alpha}.

Recall that if a reduced Noetherian ring $R$ is excellent (e.g. any localization or completion of a quotient of a polynomial ring), then the regular locus  is an open, dense subset of $\Spec R$.  This means that there is a fixed nonzero ideal $I$ such that for a multiplicatively closed subset $W$ of $R$, $R_W$ is regular iff $I \cap W \neq 0$.  In this sense, ``most'' localizations of $R$ are regular.

Now, if $W \subseteq R$ is a multiplicative set such that $R_W$ is regular, then $(R_W)^\infty$ is flat over $R_W$ \cite{Kunz-regp}.  It is easily seen that $(R_W)^\infty\cong(R^\infty)_W$, so since $R_W$ is flat over $R$ and $(R_W)^\infty$ is flat over $R_W$, it follows that $(R_W)^\infty=(R^\infty)_W$ is an $R$-flat $R^\infty$-module.  Moreover, since $R^\infty$ is clearly integral over $R$, it satisfies ``INC'' as above.  Hence, we obtain the following application of Theorem~\ref{thm:Noeth}.

\begin{thm}\label{thm:perfclosure}
Let $R$ be a reduced Noetherian ring of prime characteristic $p>0$.  Let $W \subseteq R$ be a multiplicative set such that $R_W$ is regular.  Let $L$ be any $R$-module.  Then \[
\sK_{R^\infty}(L \otimes_R {(R^\infty)}_W) = \bigcup_{\p \in \Ass_RL} \sK_{R^\infty} ((R^\infty / \p R^\infty)_W).
\]
Hence, if $Q \in \Spec R^\infty$ with $Q \cap W = \emptyset$, then $Q \in \sK_{R^\infty} (L \otimes_R R^\infty)$ if and only if there is some $\p \in \Ass_RL$ such that $Q \in \sK_{R^\infty} (R^\infty / \p R^\infty)$.
\end{thm}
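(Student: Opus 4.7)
The plan is to invoke Theorem~\ref{thm:Noeth}(iii) with the ring map $\phi : R \to R^\infty$, the $R$-module $L$, and the $R$-flat $R^\infty$-module $M := (R^\infty)_W$.  All three required hypotheses have already been set up in the paragraph preceding the statement: $R$ is Noetherian by assumption; $(R^\infty)_W \cong (R_W)^\infty$ is flat over $R$ by Kunz's theorem applied to the regular ring $R_W$, combined with flatness of $R \to R_W$; and $\phi$ satisfies INC by Cohen--Seidenberg, since $R^\infty$ is integral over $R$ by construction.  Applying part (iii) directly yields
\[
\sK_{R^\infty}\bigl(L \otimes_R (R^\infty)_W\bigr) \;=\; \bigcup_{\p \in \Ass_R L} \sK_{R^\infty}\bigl((R^\infty)_W / \p (R^\infty)_W\bigr),
\]
and the standard identification $(R^\infty)_W / \p (R^\infty)_W \cong (R^\infty / \p R^\infty)_W$ then converts this into the first displayed equation of the theorem.

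For the ``Hence'' clause, I would fix $Q \in \Spec R^\infty$ with $Q \cap W = \emptyset$ and prove the auxiliary equivalence that, for any $R^\infty$-module $N$, one has $Q \in \sK_{R^\infty}(N)$ if and only if $Q \in \sK_{R^\infty}(N_W)$.  This is immediate from Proposition~\ref{pr:omnibus}(5): because $Q \cap W = \emptyset$, localization at $Q$ factors through localization at $W$, so $N_Q \cong (N_W)_Q$ and the two local criteria coincide.  Applying this equivalence once with $N = L \otimes_R R^\infty$ (using the isomorphism $L \otimes_R (R^\infty)_W \cong (L \otimes_R R^\infty)_W$) and once with $N = R^\infty / \p R^\infty$ for each $\p \in \Ass_R L$ translates the first displayed equation into precisely the claimed biconditional at $Q$.

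I do not anticipate a serious obstacle here: the real homological content lies in Theorem~\ref{thm:Noeth}(iii), and the ring-theoretic ingredients (Kunz-flatness of $(R^\infty)_W$ over $R_W$ and integrality of $R^\infty$ over $R$) have already been assembled just above the statement.  The only point requiring any care is the elementary bookkeeping that translates between $\sK_{R^\infty}(N)$ and $\sK_{R^\infty}(N_W)$ at primes disjoint from $W$; this is routine once Proposition~\ref{pr:omnibus}(5) is invoked.
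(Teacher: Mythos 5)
Your proposal is correct and follows the same route the paper intends: the paragraph preceding the theorem in the paper has already assembled exactly the three hypotheses ($R$ Noetherian, $(R^\infty)_W$ an $R$-flat $R^\infty$-module via Kunz, and INC via integrality of $R^\infty$ over $R$) precisely so that Theorem~\ref{thm:Noeth}(iii) applies with $M = (R^\infty)_W$, giving the displayed equation after the identification $(R^\infty)_W/\p(R^\infty)_W \cong (R^\infty/\p R^\infty)_W$. For the ``Hence'' clause, your translation between $\sK_{R^\infty}(N)$ and $\sK_{R^\infty}(N_W)$ at primes disjoint from $W$ is valid; note that one can get it a touch more directly from Proposition~\ref{pr:omnibus}(6) combined with Proposition~\ref{pr:pullback} applied to the localization map $R^\infty \to (R^\infty)_W$, though your variant via Proposition~\ref{pr:omnibus}(5) and the observation that $(N_W)_Q \cong N_Q$ when $Q \cap W = \emptyset$ is equally sound.
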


In the case where we can set $W = \{1\}$, we have the following.

\begin{cor}\label{cor:regcharp}
Let $R$ be a regular Noetherian ring of prime characteristic $p>0$.  Let $L$ be any $R$-module.  Then \[
\sK_{R^\infty} (L \otimes_R R^\infty) = \bigcup_{\q \in \Ass_RL} \sK_{R^\infty} (R^\infty / \q R^\infty).
\]
\end{cor}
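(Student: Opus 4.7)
The proof proposal is essentially an immediate specialization: I plan to deduce this corollary by applying Theorem~\ref{thm:perfclosure} with the trivial multiplicative system $W = \{1\}$.

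First I would verify that the hypotheses of Theorem~\ref{thm:perfclosure} are satisfied in this setting. Since $R$ is assumed to be regular (and Noetherian of prime characteristic $p > 0$), in particular $R$ is reduced, so the theorem applies. The only remaining point to check is that with $W = \{1\}$ the localization $R_W = R$ is regular — but this is precisely our hypothesis, so the requirement is free. Thus all the input needed for Theorem~\ref{thm:perfclosure} is in place.

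Next I would simply unwind the notation. With $W = \{1\}$ one has $(R^\infty)_W = R^\infty$ canonically, and likewise $(R^\infty/\q R^\infty)_W = R^\infty/\q R^\infty$ for every $\q \in \Spec R$. Substituting these identifications into the conclusion of Theorem~\ref{thm:perfclosure} yields
\[
\sK_{R^\infty}(L \otimes_R R^\infty) = \bigcup_{\q \in \Ass_R L} \sK_{R^\infty}(R^\infty/\q R^\infty),
\]
which is exactly the statement of the corollary.

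There is essentially no obstacle here: the only thing to note is the standing fact (cited in the paragraph preceding Theorem~\ref{thm:perfclosure}) that $R^\infty$ is $R$-flat when $R$ is regular — this is Kunz's theorem — together with $R^\infty$ being integral over $R$, hence satisfying INC, so that Theorem~\ref{thm:Noeth}(iii) was indeed available to prove Theorem~\ref{thm:perfclosure}. All of this is encoded already in Theorem~\ref{thm:perfclosure}, so the corollary is simply its $W = \{1\}$ instance.
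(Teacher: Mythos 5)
Your proposal is correct and matches the paper exactly: the corollary is stated as the $W=\{1\}$ specialization of Theorem~\ref{thm:perfclosure}, and your verification that the hypotheses hold (regular implies reduced, and $R_W = R$ is regular by assumption) is precisely the intended reasoning.
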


We next show that equality holds in the case of cyclic $R$-modules,  when $S$ is a content algebra over $R$.  Recall the definition of \emph{content algebras}.
\begin{defn}\cite{OhmRu-content}\label{def:content}
Let $\phi: R \ra S$ be a homomorphism of commutative rings.  We define the \emph{content map} $c = c_\phi: S \ra \{$ideals of $R\}$ as follows: \[
c(f) := \bigcap\{I \mid I\text{ is an ideal of $R$ such that }f \in IS\}.
\]
We say that $S$ is a \emph{content algebra} over $R$ if the following four axioms hold: \begin{enumerate}
\item For all $f \in S$, $f \in c(f)S$.
\item For all $f \in S$ and $r\in R$, $c(rf) = rc(f)$.
\item $c(1)=R$.
\item (Dedekind-Mertens property) For all $f,g \in S$, there exists $n \in \N$ such that \[
c(f)^n c(fg) = c(f)^{n+1} c(g).
\]
\end{enumerate}
\end{defn}

A motivating example is where $S=R[X]$; in that case, one sees easily that the content of a polynomial is just the ideal generated by its coefficients.

We collect some known properties of content algebras below:
\begin{prop}\label{pr:content}
Let $\phi: R \ra S$ be such that $S$ is a content algebra over $R$. \begin{enumerate}[label=(\alph*)]
\item $S$ is faithfully flat as an $R$-module. \cite[Corollary 1.6]{OhmRu-content}
\item If $\p \in \Spec R$, then $\p S \in \Spec S$. \cite[Theorem 1.2]{Ru-content}
\item For all $f\in S$, $c(f)$ is a finitely generated ideal of $R$. \cite[discussion after 1.2]{OhmRu-content}
\item For any $f,g \in S$ such that $c(g)=R$, we have $c(fg) = c(f)$. \cite[6.1]{OhmRu-content}.
\item For any ideal $I$ of $R$ and any element $g\in S$, one has $g \in IS$ if and only if $c(g) \subseteq I$. \cite[1.2(iv)]{OhmRu-content}
\item\label{it:contentloc} Let $W \subseteq S$ be a multiplicative subset, and let $V := W \cap R$.  Suppose that for all $w\in W$, $c(w) \cap V \neq \emptyset$.  Then under the obvious localized map $\phi': R_V \ra S_W$, one has that $S_W$ is a content $R_V$-algebra, and the content map is given by $c(f/w) = c(f)_V$. \cite[Theorem 6.2]{OhmRu-content}
\item Hence if $W$ is the set of elements of $S$ of unit content, then $S_W$ is a content algebra over $R$ via the composite map $R \ra S \ra S_W$. \cite[6.3]{OhmRu-content}
\end{enumerate}
\end{prop}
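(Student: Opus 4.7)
The statement is a compendium of facts about content algebras, and each clause already carries a citation to the source in which it was first proved. The plan is therefore a ``proof by citation'': I would verify the proposition by locating each item in \cite{OhmRu-content} or \cite{Ru-content} and checking that the version stated here matches the version in the reference. Nothing new needs to be proved; the role of the proposition in the present paper is to collect the properties of content algebras in one place so that the subsequent arguments can quote them without interruption.

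If I were to sketch the ideas rather than just cite them, I would organize everything around the Dedekind--Mertens formula in axiom (4), which is the substantive hypothesis. Clause (a), faithful flatness, is the key consequence: the equation $c(f)^n c(fg) = c(f)^{n+1} c(g)$ together with $c(1)=R$ allows one to lift relations from $S$ back to $R$, yielding flatness; and $c(1)=R$ then prevents the extension of a proper ideal from becoming the unit ideal, giving faithfulness. Clause (b), that $\p S$ is prime for $\p \in \Spec R$, is a quick corollary of (e) and Dedekind--Mertens: if $fg \in \p S$, then $c(fg) \subseteq \p$, and the equation forces either $c(f) \subseteq \p$ or $c(g) \subseteq \p$. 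Clause (c) falls out of axiom (1) combined with the fact that $c(f)$ is defined as an intersection: if $f \in I S$ for some finitely generated $I$, then $c(f) \subseteq I$, and axiom (1) shows equality can be arranged. Clause (d) is a direct unit-content application of Dedekind--Mertens, and (e) is an immediate restatement of the definition of $c(f)$ together with axiom (1).

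The main obstacle in writing out a self-contained treatment would be clause (f), the localization result, and its specialization (g). Checking that $S_W$ is a content $R_V$-algebra and that the content map localizes as $c(f/w) = c(f)_V$ requires verifying all four axioms in the localized setting and, in particular, showing that a suitable Dedekind--Mertens exponent still exists after localization; the hypothesis that $c(w) \cap V \neq \emptyset$ for every $w \in W$ is precisely what makes this possible. Since this is exactly what \cite[Theorem 6.2]{OhmRu-content} establishes, the cleanest course of action is simply to quote it, and then derive (g) as the special case $W = \{g \in S \mid c(g) = R\}$, noting that $V = \{1\}$ works in the hypothesis of (f) by clause (d).
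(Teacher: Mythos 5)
Your proposal matches the paper exactly: the paper supplies no proof for this proposition at all, relying entirely on the citations attached to each clause, just as you propose. Your supplementary sketches of the underlying ideas (Dedekind--Mertens driving flatness and primeness of $\p S$, axiom (1) yielding finite generation of $c(f)$ and the characterization in (e)) are accurate but go beyond what the paper itself records.
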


We use the above properties without comment in proving the following result.

\begin{thm} \label{thm:content}
Let $R$ be a ring, $\ia$ an ideal of $R$, and $R \ra S$ a \emph{content algebra}.  Then \[
\sK_S(S/\ia S) = \{\p S \mid \p \in \sK_R(R/\ia)\} = \bigcup_{\p \in \sK_R(R/\ia)} \sK_S(S/\p S).
\]
\end{thm}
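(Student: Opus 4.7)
The plan is to split the statement into a straightforward part (handled by Theorem~\ref{thm:oneway} and the properties recorded in Propositions~\ref{pr:omnibus} and \ref{pr:content}) together with a content-algebra calculation for the one remaining inclusion. Since $S$ is faithfully flat over $R$ by Proposition~\ref{pr:content}(a), applying Theorem~\ref{thm:oneway} with $L = R/\ia$ and $M = S$ (noting $(R/\ia) \otimes_R S \cong S/\ia S$) yields
\[
\sK_S(S/\ia S) \supseteq \bigcup_{\p \in \sK_R(R/\ia)} \sK_S(S/\p S).
\]
For each such $\p$, Proposition~\ref{pr:content}(b) says that $\p S$ is prime, so by Proposition~\ref{pr:omnibus}(1) we have $\sK_S(S/\p S) = \{\p S\}$. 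This simultaneously proves the second equality and one direction of the first.

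For the reverse inclusion of the first equality, let $Q \in \sK_S(S/\ia S)$ and set $\p := \phi^*(Q) = Q \cap R$. By Proposition~\ref{pr:pullback}, $\p \in \sK_R(S/\ia S)$, and by the Iroz--Rush result recalled in the remark after Theorem~\ref{thm:flat} (valid since $S$ is faithfully flat over $R$), $\sK_R(S/\ia S) = \sK_R(R/\ia)$; so $\p \in \sK_R(R/\ia)$ and $\p S \subseteq Q$. It remains to show that every $f \in Q$ lies in $\p S$, which by Proposition~\ref{pr:content}(e) amounts to $c(f) \subseteq \p$. To bring $\p$ into the position of a maximal ideal I would localize at $Q$ over $\p$. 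The hypothesis of Proposition~\ref{pr:content}(f) for $W := S \setminus Q$ is satisfied because for $w \in W$ axiom~(1) gives $w \in c(w)S$, so $c(w) \subseteq \p$ would force $w \in \p S \subseteq Q$, a contradiction; hence $S_Q$ is a content $R_\p$-algebra with content map $c(f/1) = c(f)_\p$, and by Proposition~\ref{pr:omnibus}(5), $Q S_Q \in \sK_{S_Q}(S_Q/\ia S_Q)$.

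The final step --- which I expect to be the main obstacle --- is a Dedekind--Mertens calculation. Since $(f/1) \subseteq QS_Q$ is finitely generated, there exists $\bar z \in (S_Q/\ia S_Q) \setminus \{0\}$ with $(f/1)\bar z = 0$; lifting to $z \in S_Q$, Proposition~\ref{pr:content}(e) translates this into $c((f/1)z) \subseteq \ia R_\p$ while $c(z) \not\subseteq \ia R_\p$. Axiom~(4) of Definition~\ref{def:content} then furnishes an integer $n$ with $c(f)_\p^n \cdot c((f/1)z) = c(f)_\p^{n+1} \cdot c(z)$. Were $c(f)_\p = R_\p$, this would collapse to $c(z) = c((f/1)z) \subseteq \ia R_\p$, a contradiction; hence $c(f)_\p \subsetneq R_\p$, so $c(f)_\p \subseteq \p R_\p$ (as $R_\p$ is local), which implies $c(f) \subseteq \p$ and so $f \in \p S$. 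The reason localization at $\p$ was necessary is precisely that only in a local ring does a proper content ideal automatically sit inside the specific prime we want.
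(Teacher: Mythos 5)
Your proof is correct, and while the high-level strategy (invoke Theorem~\ref{thm:oneway} for one containment, then show $\p := Q \cap R \in \sK_R(R/\ia)$ and $Q = \p S$) mirrors the paper's, you carry out the two middle steps differently. To see that $\p \in \sK_R(R/\ia)$, the paper gives a hands-on content computation: for a finitely generated $I \subseteq \p$ it produces $g$ with $IS \subseteq (\ia S :_S g) \subseteq Q$, writes $c(g) = (r_1,\dots,r_n)$, shows $\bigcap_i(\ia :_R r_i)$ is squeezed between $I$ and $\p$, and uses prime avoidance to pick a single $(\ia :_R r_j)$. You instead invoke Proposition~\ref{pr:pullback} together with the faithfully-flat descent statement recorded after Theorem~\ref{thm:flat} (the Iroz--Rush consequence $\sK_R(L\otimes_R S) = \sK_R L$), which is shorter and makes it clear that this part has nothing to do with contents --- it only uses faithful flatness. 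For $Q \subseteq \p S$, the paper localizes only the $R$-side (inverting $R\setminus\p$) and reads off the contradiction from the packaged fact Proposition~\ref{pr:content}(d) that $c(b)=R$ implies $c(bg)=c(g)$, whereas you localize $S$ at $Q$ as well (after verifying the hypothesis of Proposition~\ref{pr:content}(f), which you do correctly) and re-derive that fact directly from the Dedekind--Mertens axiom. Both routes are sound; yours trades a bit of extra localization bookkeeping for a more self-contained final calculation, while the paper's leans more heavily on the content formalism throughout.
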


\begin{proof}
By Theorem \ref{thm:oneway} it suffices to show containment in only one direction, namely if $Q \in \sK_S(S/\ia S)$, then $Q = \p S$
for some $\p \in \sK_R(R/\ia)$.  We  will first show that $\p:=Q\cap R$ is in $\sK_R(R/\ia)$.  To that end, let $I$ be a finitely generated $R$-ideal contained in $\p$.  Then $IS$ is a finite $S$-ideal and $IS \subseteq \p S \subseteq Q$, so by our choice of $Q$, there is some $g\in S$ such that \[
IS \subseteq (\ia S :_S g) \subseteq Q.
\]
We have that $c(g)$ is a finitely generated ideal of $R$; say $c(g) = (r_1, \dotsc, r_n)$.  Then for any $a\in I$, we have $ag \in \ia S$, so that $a c(g) = c(ag) \subseteq \ia$, which means that $a r_i \in \ia$ for all $i$.  That is, \[
I \subseteq \bigcap_{i=1}^n (\ia :_R r_i).
\]
On the other hand, for any $a \in \bigcap_{i=1}^n (\ia :_R r_i)$, we have $c(ag) = a c(g) \subseteq \ia$, so that $ag \in \ia S$, whence $a \in ((\ia S :_S g) \cap R) \subseteq Q \cap R = \p$.  Thus, $\bigcap_{i=1}^n (\ia :_R r_i) \subseteq \p$, so there is some $j$ with $1 \leq j \leq n$ such that $(\ia :_R r_j) \subseteq \p$.  In sum, we have \[
I \subseteq (\ia :_R r_j) \subseteq \p,
\]
which shows that $\p \in \sK_R(R/\ia)$.

The last thing we need to show is that if $Q \in \sK_S(S/\ia S)$ and $\p = Q \cap R$, then $Q = \p S$.  Setting $W := R \setminus \p$, since  $S_W$ is a content $R_\p$-algebra, it is enough to show the result when $(R,\p)$ is local.

In that case, suppose there is some $b \in Q \setminus \p S$.  Then $c(b) \nsubseteq \p$, whence (since $\p$ is the unique maximal ideal of $R$), we have $c(b) = R$.  Since $Q \in \sK_S(S/\ia S)$, there is some $g \in S$ such that \[
b \in (\ia S :_Sg) \subseteq Q.
\]
But then $bg \in \ia S$, so that $c(g) = c(bg) \subseteq \ia$ (where the equality follows from the fact that $c(b) = R$), whence $g \in \ia S$ and $(\ia S :_Sg) = S$, contradicting the fact that this colon ideal is supposed to be in $Q$.
\end{proof}

In particular, the above result applies to the localization $R(X)$ of $R[X]$ at the set of polynomials with unit content, or (in view of Proposition~\ref{pr:content}, part \ref{it:contentloc}) many other localizations between $R[X]$ and $R(X)$.

\begin{example}\label{ex:Kfail}
We show here that the containment in Theorem~\ref{thm:oneway} does not hold for \emph{Krull} primes, even when $M=S$ is a polynomial extension and $L$ is cyclic.  Let $R$ be a ring, $L$ an $R$-module, and $P \in \Spec R$ such that $P \in \K_R(L) \setminus \sK_R(L)$ (recall the existence of such $R$, $L$, and $P$ from  Example~\ref{ex:KsK} -- an example which also shows that $L$ may be taken to be a cyclic $R$-module).  Let $u$ be an indeterminate over $R$ and $S := R[u]$.  Note that $L \otimes_R S \cong L[u]$.  Now  recall \cite[Theorem 2.5]{IrRu-ass}, which says that \[
\{\p R[u] \mid \p \in \sK_R(L)\} = \K_S(L[u]) = \sK_S(L[u]).
\]
It follows that although $P \in \K_R(L)$ (and clearly $\K_{R[u]}(R[u]/PR[u]) =\{PR[u]\}$, since $PR[u] \in \Spec R[u]$), one has $P R[u] \notin \K_{R[u]}(L[u])$.

Note also that the analogue of Proposition~\ref{pr:pullback} fails for Krull primes.  It is shown in \cite[Proposition 2.3]{IrRu-ass} that for any ring map $\phi: R \ra S$ and $S$-module $N$, one has $\phi^*[\K_S(N)] \subseteq \K_R(N)$.  However, letting $R$, $S$, $L$, $P$ be as above, set $N := L[u]$; then $P \in \K_R(N)$, but $P$ is not the contraction of any element of $\K_S(N)$.
\end{example}

\section{Strong Krull primes and $\Hom$-sets}\label{sec:Hom}
Next, we recall the following characterization of strong Krull primes in terms of $\Hom$-sets.  As the given reference is not widely available, we also provide a proof for the reader's convenience.

\begin{lemma}\cite[part of Proposition 1.4]{Mcd-unpub}\label{lem:fp}
Let $(R,\m)$ be a local commutative ring and $M$ an $R$-module.  Then $\m \in \sK_RM$ if and only if for every finitely presented nonzero $R$-module $L$, \[
\Hom_R(L,M) \neq 0.
\]
\end{lemma}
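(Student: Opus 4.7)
The plan is to prove the two directions separately; the $(\Leftarrow)$ implication is essentially immediate, while all the work is in $(\Rightarrow)$, which reduces the problem to the cyclic case $L = R/I$.

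For $(\Leftarrow)$, given a finitely generated ideal $I \subseteq \m$, the module $R/I$ is finitely presented and nonzero (since $I$ is a proper ideal), so by hypothesis there is some nonzero $g \colon R/I \to M$. Setting $z := g(\bar 1)$, we have $I \subseteq \ann z$, and since $z \neq 0$ the ideal $\ann z$ is proper, hence contained in $\m$ by locality. This gives $I \subseteq \ann z \subseteq \m$, so $\m \in \sK_R M$.

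For $(\Rightarrow)$, given a finitely presented nonzero $R$-module $L$, I will construct a finitely generated ideal $I \subseteq \m$ and a nonzero map $\phi \colon L \to R/I$; composing $\phi$ with the nonzero map $R/I \to M$ supplied by the hypothesis $\m \in \sK_R M$ applied to $I$ (exactly as in the easy direction) then produces a nonzero element of $\Hom_R(L,M)$. The construction goes through a \emph{minimal} presentation of $L$: since $L$ is finitely generated over the local ring $(R,\m)$ and $L/\m L \neq 0$ by Nakayama, I can choose a surjection $\pi \colon R^s \onto L$ whose image spans $L/\m L$ minimally, equivalently $\ker \pi \subseteq \m R^s$. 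Because $L$ is finitely presented, a standard Schanuel-type argument gives that $\ker\pi$ is itself finitely generated, say by relations $u_j = (u_{j,1},\dotsc,u_{j,s}) \in \m R^s$ for $j = 1,\dotsc,t$. I then set $I := (u_{1,1}, u_{2,1}, \dotsc, u_{t,1})$, which is a finitely generated subideal of $\m$. The $R$-linear map $R^s \to R/I$ sending the first standard basis vector to $\bar 1$ and the others to $\bar 0$ annihilates each $u_j$ (since $u_{j,1} \in I$), so it descends through $\pi$ to a well-defined map $\phi \colon L \to R/I$, which is nonzero because the image of the first generator is $\bar 1 \neq \bar 0$.

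The key subtlety is choosing the presentation carefully: for an arbitrary finite presentation $R^n \to R^s \to L \to 0$, some coordinate of every relation could be a unit, forcing the corresponding $I$ to equal $R$ and collapsing $R/I$ to zero. Insisting that $\ker\pi \subseteq \m R^s$ — precisely the minimality condition provided by Nakayama — guarantees every coordinate of every relation lies in $\m$, so $I$ stays inside $\m$. Finite presentation of $L$ is exactly what is needed to ensure the kernel of our minimal surjection is finitely generated, and hence that $I$ itself is finitely generated, which is what lets the strong Krull hypothesis engage.
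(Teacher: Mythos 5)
Your proof is correct and follows essentially the same route as the paper: both directions reduce to cyclic modules $R/I$ with $I$ a finitely generated proper ideal, and the forward direction constructs a nonzero map $L \to R/I$ by taking a presentation with all relations inside $\m R^s$ and projecting onto the first coordinate modulo the ideal generated by the first entries of the relations. Your ideal $I$ (first coordinates of the relation vectors) is exactly the paper's ideal $J$ (first row of the presentation matrix), and your map $e_1 \mapsto \bar 1$, $e_i \mapsto 0$ is the paper's composite $L \cong R^s/K \onto R^s/U \cong R/J$; the only cosmetic difference is that you invoke Nakayama and minimal presentations where the paper simply shrinks the matrix to put its entries in $\m$.
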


\begin{proof}
First, suppose $\Hom_R(L,M) \neq 0$ for all finitely presented nonzero $L$.  Then since for every finitely generated proper ideal $I$, $R/I$ is a finitely presented nonzero $R$-module, one has $\Hom_R(R/I, M) \neq 0$ for all such $I$.  But then if $0 \neq g \in \Hom_R(R/I, M)$ then $g(\bar{1}) \neq 0$, and we have $I \subseteq \ann g(\bar{1}) \subseteq \m$, where $\bar{1}$ is the image of $1$ in $R/I$.  Hence, $\m \in \sK_RM$.

Conversely, suppose $\m \in \sK_RM$.  Let $L$ be a finitely presented nonzero $R$-module.  This means that there is an exact sequence of the following form: \[
R^t \overset{\alpha = (r_{ij})}{\longrightarrow} R^s \rightarrow L \rightarrow 0.
\]
Here $(r_{ij})$ represents an $s \times t$ matrix of elements of $R$.  Since $R$ is local, we may assume that each $r_{ij} \in \m$.  (If not, one can represent some generator of $L$ as an $R$-linear combination of the other generators and shrink the matrix accordingly.)  Let $J$ be the ideal generated by the set $\{r_{1j} \mid 1 \leq j \leq t\}$.  Then $J$ is a proper, finitely generated ideal of $R$.  If we let $e_1, \dotsc, e_s$ be the unit basis vectors of $R^s$ with respect to the given matrix, let $U := J e_1 \oplus \left(\bigoplus_{i=2}^s R e_i\right)$ and let $K := \im \alpha$.  Then we claim that $K \subseteq U$.

To see this, note that $K$ is generated by the elements $k_j := \sum_{i=1}^s r_{ij} e_i$ for $1 \leq j \leq t$.  But $k_j = r_{1j} e_1 + (\sum_{i=2}^s r_{ij} e_i) \in U$ since $r_{1j} \in J$.  Therefore, there is a surjection $R^s / K \onto R^s  / U$.  But $R^s / U \cong R/J$.  Since $J$ is a finitely generated proper ideal and $\m \in \sK_R(M)$, we have $J \subseteq \ann z \subseteq \m$ for some $z \in M$, whence there is a nonzero $R$-linear map $\beta: R/J \rightarrow M$ given by $\bar{r} \mapsto rz$.  Hence, the composition \[
L \cong R^s / K \onto R^s / U \cong R/J \overset{\beta}{\rightarrow} M
\] 
is also a nonzero map, whence $\Hom_R(L, M) \neq 0$.
\end{proof}

The next result follows directly from the lemma.

\begin{thm}\cite[Proposition 2.6]{Mcd-unpub}\label{thm:McD-Hom}
Let $R$ be a commutative ring, $L$ a finitely presented $R$-module, and $M$ an $R$-module.  Then: \[
\sK_R \Hom_R(L,M) = \Supp_R L \cap \sK_R M.
\]
\end{thm}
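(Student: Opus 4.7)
The natural approach is to localize everything at a prime $\p$ and then invoke Lemma~\ref{lem:fp}. By the local property (Proposition~\ref{pr:omnibus}(5)), $\p \in \sK_R\Hom_R(L,M)$ iff $\p R_\p \in \sK_{R_\p}(\Hom_R(L,M)_\p)$, and similarly $\p \in \sK_R M$ iff $\p R_\p \in \sK_{R_\p}M_\p$. Since $L$ is finitely presented, $\Hom_R(L,M)_\p \cong \Hom_{R_\p}(L_\p, M_\p)$, and $\p \in \Supp_R L$ iff $L_\p \neq 0$. So after replacing $(R,L,M)$ by $(R_\p, L_\p, M_\p)$, the theorem reduces to: for a local ring $(R,\m)$, a finitely presented $R$-module $L$, and an $R$-module $M$, one has
\[
\m \in \sK_R \Hom_R(L,M) \quad \Longleftrightarrow \quad L \neq 0 \text{ and } \m \in \sK_R M.
\]

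The key tool is the tensor-hom adjunction combined with Lemma~\ref{lem:fp}: for any finitely presented $R$-module $N$,
\[
\Hom_R(N, \Hom_R(L,M)) \cong \Hom_R(N \otimes_R L, M),
\]
and $N \otimes_R L$ is again finitely presented.

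For the reverse direction, assume $L \neq 0$ and $\m \in \sK_R M$. Given any finitely presented nonzero $R$-module $N$, I want to show $\Hom_R(N \otimes_R L, M) \neq 0$; by Lemma~\ref{lem:fp} applied to $M$, it suffices to check that $N \otimes_R L$ is a nonzero finitely presented $R$-module. Finite presentation is automatic. Nonvanishing follows from Nakayama: since $N$ and $L$ are finitely generated nonzero modules over the local ring $(R,\m)$, the quotients $N/\m N$ and $L/\m L$ are nonzero vector spaces over $k := R/\m$, and $(N \otimes_R L)/\m(N \otimes_R L) \cong (N/\m N) \otimes_k (L/\m L) \neq 0$.

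For the forward direction, assume $\m \in \sK_R\Hom_R(L,M)$. If $L = 0$ then $\Hom_R(L,M) = 0$, contradicting Proposition~\ref{pr:omnibus}(4) applied to $\sK$; so $L \neq 0$. To see $\m \in \sK_R M$, let $I$ be any finitely generated proper ideal of $R$, take $N := R/I$ (finitely presented and nonzero), and apply Lemma~\ref{lem:fp} to $\Hom_R(L,M)$ plus the adjunction above to get $\Hom_R(L/IL, M) \neq 0$. Pick a nonzero homomorphism $f$, choose $\bar{x} \in L/IL$ with $y := f(\bar{x}) \neq 0$, and set $J := \ann_R \bar{x}$. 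Then $I \subseteq J \subseteq \ann_R y$, and $\ann_R y$ is a proper ideal of the local ring, so $I \subseteq \ann_R y \subseteq \m$. Since $I$ was arbitrary, $\m \in \sK_R M$.

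The only mild obstacle is the nonvanishing of $N \otimes_R L$ in the reverse direction, which is handled cleanly by reducing modulo the maximal ideal as above; everything else is bookkeeping with adjunction and the local criteria already established.
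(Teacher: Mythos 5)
Your proof is correct and follows essentially the same approach as the paper: localize to reduce to the case $(R,\m)$ local, then combine Lemma~\ref{lem:fp} with tensor-$\Hom$ adjunction in both directions. The only slight deviation is in the forward direction: the paper applies a second adjunction to rewrite $\Hom_R(L \otimes_R R/I, M)$ as $\Hom_R(L, \Hom_R(R/I, M))$, so that nonvanishing of $\Hom_R(R/I, M)$ falls out immediately, whereas you extract the element $y = f(\bar x)$ of $M$ by hand from a nonzero $f \colon L/IL \ra M$; both routes are valid and of comparable length.
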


Again, we provide a proof for the convenience of the reader.

\begin{proof}
Since membership in $\sK$ is a local property, we may assume $(R,\m)$ is local and then check for the presence of the element  $\m$ in each side of the equation.
  
First, assume $\m \in \sK_R \Hom_R(L,M)$.  Then for any finitely generated ideal $I$, we have \begin{align*}
0 \neq \Hom_R(R/I, \Hom_R(L,M)) &\cong \Hom_R(L \otimes_R R/I, M) \\
 &\cong \Hom_R(L, \Hom_R(R/I, M))
\end{align*}
by $\Hom$-$\otimes$ adjointness.

Thus, $L \neq 0$ (which means that $\m \in \Supp L$) and $\Hom_R(R/I, M) \neq 0$ (which shows that $\m \in \sK_R(M)$).

Conversely, assume that $\m \in \Supp_R L \cap \sK_RM$.  Say $L \cong R^s / K$, where $K$ is a finitely generated submodule of $R^s$.  Let $I$ be a finitely generated ideal of $R$. Then $L/IL \cong \frac{R^s}{K + I R^s}$.  But $K + I R^s$ is a finitely generated submodule of $R^s$, which means that $L/IL$ is finitely presented.  Moreover, it is nonzero by the Nakayama lemma (since $L \neq 0$ is finitely generated and $I \subseteq \m$).  Hence, by Lemma~\ref{lem:fp}, \[
0 \neq \Hom_R(L/IL, M) \cong \Hom_R(L \otimes_R R/I, M) \cong \Hom_R(R/I, \Hom_R(L,M)).
\]
Since $I$ was arbitrary, it follows that $\m \in \sK_R \Hom_R(L,M)$.
\end{proof}

Generalizing to a base change situation, we have the following:

\begin{thm}\label{thm:Hombc}
Let $\phi: R \ra S$ be a ring homomorphism, $L$ a finitely presented $R$-module, and $M$ an $S$-module.  Then \[
\sK_S \Hom_R(L,M) = (\phi^*)^{-1}(\Supp_R L) \cap \sK_S M.
\]
\end{thm}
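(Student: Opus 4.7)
The plan is to proceed in close analogy with the proof of Theorem~\ref{thm:McD-Hom}, using Lemma~\ref{lem:fp}, Proposition~\ref{pr:pullback}, and the standard $\Hom$-adjunction, but bridging the two rings by means of $\phi$.

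First I would reduce to the local case. By Proposition~\ref{pr:omnibus}(5), membership of $Q\in\Spec S$ in a $\sK_S$ set is detected after localizing the module at $Q$. For $L$ finitely presented, a presentation $R^m\to R^n\to L\to 0$ combined with the exactness of $(-)_Q$ (as an $S$-localization) yields $\Hom_R(L,M)_Q\cong \Hom_R(L,M_Q)$, and since every element of $R\setminus \p$ (where $\p:=\phi^{-1}(Q)$) acts invertibly on $M_Q$, this further equals $\Hom_{R_\p}(L_\p,M_Q)$. Note also that $\p\in\Supp_RL$ iff $L_\p\neq 0$. Hence it suffices to prove the theorem when $(R,\p)$ and $(S,Q)$ are both local and $\phi$ sends $\p$ into $Q$; one must then show that $Q\in \sK_S\Hom_R(L,M)$ iff $L\neq 0$ and $Q\in \sK_S M$.

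For the forward implication, assume $Q\in \sK_S\Hom_R(L,M)$. The set is nonempty as a $\sK_S$-set, so $\Hom_R(L,M)\neq 0$, whence $L\neq 0$. For any finitely generated ideal $J\subseteq Q$, combining Lemma~\ref{lem:fp} for $S$ with the natural adjunction
\[
\Hom_S\!\bigl(S/J,\Hom_R(L,M)\bigr)\;\cong\;\Hom_R\!\bigl(L,(0:_M J)\bigr)
\]
(where $(0:_MJ)=\Hom_S(S/J,M)$) gives $\Hom_R(L,(0:_MJ))\neq 0$, hence in particular $(0:_MJ)\neq 0$; as $J$ ranges over all such ideals, Lemma~\ref{lem:fp} then yields $Q\in\sK_SM$.

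For the backward implication, suppose $L\neq 0$ and $Q\in \sK_SM$. Fix a finitely generated ideal $J\subseteq Q$; by Lemma~\ref{lem:fp} applied to $S$, it suffices to produce a nonzero element of $\Hom_S(S/J,\Hom_R(L,M))\cong \Hom_R(L,(0:_MJ))$. The key step is to show $Q\in\sK_S(0:_MJ)$: for any finitely generated $J'\subseteq Q$, the ideal $J+J'$ is finitely generated and contained in $Q$, so by hypothesis there is $z\in M$ with $J+J'\subseteq\ann_Sz\subseteq Q$; this $z$ lies in $(0:_MJ)$ and witnesses $J'\subseteq\ann_Sz\subseteq Q$. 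By Proposition~\ref{pr:pullback}, $\p=\phi^*(Q)\in\sK_R(0:_MJ)$, and then Lemma~\ref{lem:fp} applied over the local ring $(R,\p)$ to the nonzero finitely presented module $L$ gives $\Hom_R(L,(0:_MJ))\neq 0$. Running this argument for every finitely generated $J\subseteq Q$ yields $Q\in\sK_S\Hom_R(L,M)$.

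The main obstacle is the reduction step: one must check that localizing $\Hom_R(L,M)$ at the prime $Q$ of $S$ (not of $R$) still produces the expected Hom-module over the localized rings, which is where finite presentation of $L$ is essential. Once that reduction is set up cleanly, the rest is a bookkeeping exercise combining adjunction with the two already-proved facts that $\sK$ pulls back along ring maps (Proposition~\ref{pr:pullback}) and is characterized in the local case by Hom-sets out of finitely presented modules (Lemma~\ref{lem:fp}).
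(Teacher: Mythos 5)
Your proof is correct but takes a genuinely different route from the paper's. The paper's proof never leaves the single-ring world: it invokes the natural $S$-module isomorphism $\Hom_R(L,M) \cong \Hom_S(S\otimes_RL,M)$, notes that $S\otimes_RL$ is finitely presented over $S$, and then applies the already-established Theorem~\ref{thm:McD-Hom} over $S$, reducing the entire statement to the support computation $\Supp_S(S\otimes_RL)=(\phi^*)^{-1}(\Supp_RL)$ (which is proved by base-changing a minimal presentation across a local homomorphism). You instead re-run the local argument of Theorem~\ref{thm:McD-Hom} directly in the two-ring setting: after localizing at $Q$ (using finite presentation of $L$ and flatness of $S_Q$ over $S$ to get $\Hom_R(L,M)_Q\cong\Hom_{R_\p}(L_\p,M_Q)$), you use the adjunction $\Hom_S(S/J,\Hom_R(L,M))\cong\Hom_R(L,(0:_MJ))$ — the ``other side'' of the adjunction compared to the paper's $\Hom_R(L/IL,M)$ — together with Lemma~\ref{lem:fp} over both rings and Proposition~\ref{pr:pullback} to bridge them. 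Your key observation that $Q\in\sK_SM$ implies $Q\in\sK_S(0:_MJ)$ for any finitely generated $J\subseteq Q$ (via the trick of testing against $J+J'$) is the computational step that replaces the paper's Nakayama argument showing $L/IL\neq 0$. The paper's approach is shorter because it recycles Theorem~\ref{thm:McD-Hom} wholesale; yours is more self-contained and gives a direct view of the two-ring interaction, at the cost of the reduction step, which you correctly flag as needing care (the point that $\Hom$ from a finitely presented module commutes with localization of the $S$-module $M$).
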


\begin{proof}
We begin with the following claim:\[
\Supp_S(S \otimes_RL) = (\phi^*)^{-1}(\Supp_R L).
\]
\noindent\textit{Proof of the claim.}
It is enough to show that if $\gamma: (R,\m) \ra (S,\n)$ is a local homomorphism of (not necessarily Noetherian) local rings and $L$ a finitely presented $R$-module, then $L \neq 0$ if and only if $S \otimes_R L\neq 0$.  It is obvious that if $S\otimes_RL \neq 0$ then $L\neq 0$.  So suppose that $L \neq 0$.  Let $(a_{ij})$ be a minimal presenting matrix for $L$ over $R$.  Then each $a_{ij} \in \m$.  But then $(\gamma(a_{ij}))$ presents $S \otimes_R L$ over $S$ and each $\gamma(a_{ij}) \in \n$, hence the presentation is minimal, so that in particular, $S \otimes_R L \neq 0$.
\vspace{3pt}

Next note that $\Hom_R(L, M) \cong \Hom_S(S \otimes_R L, M)$ as $S$-modules, and that $S \otimes_RL$ is finitely presented over $S$ (just base-change the presenting matrix of $L$ over $R$ to $S$).  So by Theorem~\ref{thm:McD-Hom} above, we have \begin{align*}
\sK_S \Hom_R(L,M) &= \sK_S \Hom_S(S \otimes_R L, M) \\
&= \Supp_S(S \otimes_R L) \cap sK_SM \\
&=  (\phi^*)^{-1}(\Supp_R L) \cap \sK_S M,
\end{align*}
where the final equality follows from the initial claim.
\end{proof}

\section{Some cautionary counterexamples}\label{sec:ex}

The reader may by this time feel that strong Krull primes are a nearly perfect replacement in non-Noetherian rings to the notion of associated primes in Noetherian rings, and that most everything true in the latter context may be reformulated to be true in the former context.  In the interest of honing one's intuition, therefore, it is important to note the following examples where the behavior of strong Krull primes is really quite different from what one expects from working with associated primes over Noetherian rings. 

 \begin{example}
We first give an example of a finitely generated module $M$ over a ring $R$ where $\p \in \sK(M)$, but no submodule $L$ of $M$ satisfies $\sK (L) = \{\p\}$.

 Let $V$ be a valuation ring with value group $\Z\oplus \Z$ ordered lexicographically.  Thus $V$ has exactly two nonzero prime ideals $\p\subset \m$.  We first claim that for any $0\neq a\in \m$, sK$(V/aV)=\{\m\}$ if $a\not\in \p$ and  $\sK(V/aV)=\{\p,\m\}$, if $a\in \p$.   In the first case $\m$ is the only prime ideal that contains $aV$, so $aV$ is $\m$-primary. In the latter case, since $\p$ is minimal over $aV$, $\p \in$ sK$(V/aV)$.  Additionally,   for any $b\in \m\setminus aV$, $(aV: a/b) =bV$, and so if  $b\in \m\setminus \p$, a priori $\m$ is a minimal prime over $bV$.   Thus we also have  $\m\in $ sK$(V/aV)$, which completes the proof of the claim.

 Let $0\neq a \in \p$, so sK$(V/aV)=\{\p,\m\}$.  We will show that no submodule  $L$ of $V/aV$ satisfies sK$(L)=\{\p\}$.   It suffices to do this for cyclic submodules $L$.  However for $b\in \m\setminus aV$, $(aV:b) = (a/b)V$.  Thus $L$ must be isomorphic to $V/cV$ for some $c\in V$.  Hence by the claim, $\sK(L)\neq \{\p\}$.

This can never happen if $R$ is Noetherian, for in that case if $\p \in \sK(M) = \Ass(M)$, $R/\p$ embeds as an $R$-submodule of $M$, and $\{\p\} = \Ass_R(R/\p) = \sK_R(R/\p)$.

Note also that in this example, $\K_V(V/aV) = \wAss_V(V/aV) = \sK_V(V/aV)$, so that Krull primes and weakly associated primes exhibit the same pathology.
 \end{example}

\begin{example}\label{ex:axes} Recall that in a reduced Noetherian ring $R$, $\Ass(R)$ is exactly the set of  minimal primes of $R$.  We show below that a reduced ring $R$ may have the property that $\sK(R)$ includes a non-minimal prime, \emph{even when $R$ is local}.

Let $T=K[X_1,X_2,\ldots]_P$, where $P$ is the maximal ideal of $K[X_1,X_2,\ldots]$ generated by all the $X_i$, and $K$ is a field.   Let $R=T/(X_iX_j; i\neq j)$. Then $R$ is a reduced, non-Noetherian local ring of dimension 1. We denote the image of $X_i$ in $R$ by $x_i$.  The maximal ideal $\m$ of $R$ is generated by the set $\{x_i\}_{i=1,2, \ldots}$.    The other prime ideals are of the form $\p_i := (\{x_j \mid j \neq i\})$ (one for each positive integer $i$).     We will show that $\m \in \sK_R(R)$.  Let $I := (f_1, \dotsc, f_t)$ be a finitely generated ideal contained in $\m$.  Each $f_i$ can be written as a sum $g_{i1}+g_{i2}+\cdots+g_{im_i}$, where each $g_{ij}$ is a polynomial in a single variable $x_{s_{ij}}$, with constant term zero.  Pick an index $k$ such that $k>s_{ij}$ for all $i$ and $j$.   Then $x_kx_{s_{ij}} =0$ for all $i$. Thus $x_k f_i = 0$ for all $i$, whence $I \subseteq \ann(x_k) \subseteq \m$, so that $\m \in \sK_R(R)$.

Of course, $\m \in \K_R(R)$ as well, so Krull primes exhibit the same pathology.
\end{example}

\begin{example}\label{ex:onewaysamering}
If $\{M_i\}$ is an indexed set of $R$-modules, one always has $\Ass_R(\oplus_i M_i) = \bigcup_i \Ass_R(M_i)$.  Here we show that this equality does not hold when $\Ass$ is replaced by $\sK$.  Our construction also provides a counterexample to the equality (\ref{eq:b}) from \S\ref{sec:bc} in the case where $L=R=S$.

Let $V$, $\m$, and $\p_i$ ($i \in \N$) be as in Example~\ref{ex:nonmax}.  Fix an element $z\in V$ such that $v(z) = (1, 0, 0, \ldots)$, with zeros at all spots except the first entry.  Note that $z \in \p_1 \setminus (0)$.  Let $R := V/(z)$.  Then the prime spectrum of $R$ is $P_1 \subset P_2 \subset \cdots \subset \n$, where $P_i := \p_i / (z)$ and $\n := \m/(x)$.  For each positive integer $i$, let $M_i := R_{P_i}$, thought of as an $R$-module, and $M := \oplus_{i=1}^\infty M_i$.  First note that $\sK_R(M_n) = \sK_R(R_{P_n}) = \ell_n^*(\sK_{R_{P_n}} (R_{P_n})) \subseteq \ell_n^*(\Spec R_{P_n}) = \{P_1, \ldots P_n\}$ (with the second equality by Proposition~\ref{pr:pullback}), where $\ell_n: R \ra R_{P_n}$ is the localization map.

On the other hand, we have \begin{align*}
\sK_R(M_n) &= \sK_R(R_{P_n}) = \ell_n^*(\sK_{R_{P_n}}(R_{P_n})) \\
&= \ell_n^*(\{Q R_{P_n} \mid Q \in \sK_R(R), Q \subseteq P_n\}) \\
&= \{Q \in \sK_R(R) \mid Q \subseteq P_n\} \supseteq \{Q \in \wAss_R(R) \mid Q \subseteq P_n\} \\
&= \{\q/(z) \mid \q \in \wAss_V(V/(z)), \q \subseteq \p_n\} = \{\p_i / (z) \mid 1\leq i \leq n\} \\
&= \{P_i \mid 1 \leq i \leq n\}.
\end{align*}
Thus, $\sK_R(M_n) = \{P_1, \ldots, P_n\}$.

In particular, $\n \notin \bigcup_i \sK_R(M_i)$.  But $\n \in \sK_R(M)$.  To see this, let $I$ be a finitely generated  subideal of $\n$.  Then $I = yR$ for some $y\in \n = \bigcup_i P_i$, so that $y \in P_i$ (i.e. $I \subseteq P_i$) for some $i$.  But $P_i \in \wAss_R(M_i) \subseteq \sK_R(M_i)$, whence there is some $z\in M_i \subseteq M$ such that $I \subseteq \ann_R(z) \subseteq P_i \subseteq \n$.  Thus, $\sK$ does not respect countable direct sums.

Finally, we show that Equation~\ref{eq:b} fails for this $M$ when $L=R=S$.  We have $\n \in \sK_R(M)$.  However, for any $P \in \Spec R$, we have $\n \notin \sK_R(M/PM)$, as we now demonstrate.  When $P \neq \n$, then by Lemma~\ref{lem:MpM}, we have $\sK_R(M/PM) \subseteq \{P\}$, a set that does not contain $\n$.  On the other hand, $M/\n M = 0$.  To see this, let $z\in M$.  Then $z = \oplus_{i=1}^n \frac{r_i}{s_i}$ for some $n$, where each $\frac{r_i}{s_i} \in R_{P_i}$.  Let $a\in \n \setminus \bigcup_{i=1}^n P_i$.  Then $y = \oplus_{i=1}^n \frac{r_i}{as_i} \in M$ and $z=ay \in \n M$.  Since $M/\n M = 0$, we have $\sK_R(M/\n M) = \emptyset$, which again does not contain $\n$.
\end{example}

\begin{example}
Finally, in a Noetherian ring $R$, every ideal consisting of zero-divisors is contained in an element of $\Ass R$.  One might hope the analogous fact would hold for strong Krull primes of a  commutative ring, but such is not the case, \emph{even when $R$ is reduced}, as seen below.  Indeed, in our example, said ideal will even avoid containment in \emph{Krull} primes.

  Let $(D,Q)$ be a local Noetherian integral domain of positive dimension, and define $E$ to be the direct sum of denumerably many copies of $D/Q$,  with  multiplication on $E$ defined coordinatewise.   The bowtie ring $R:=D\bowtie E$ is defined by imposing the following multiplication on $D\oplus E$: if $d_1,d_2 \in D$ and $e_1,e_2 \in E$, then $(d_1,e_1) \cdot (d_2, e_2):=(d_1d_2, d_1e_2+d_2e_1+e_1e_2)$.   It was shown in \cite[Proposition 1]{DoSh-bowtie} that this ring is its own total ring of quotients, i.e., every nonunit of $R$ is a zero-divisor.  It is not difficult to see that for each (prime) ideal $P$ of $D$,  the set $P\bowtie E:= \{(d,a): d\in P\}$ is a (prime) ideal of $R$, which is a maximal ideal iff $P=Q$.  We write $\m :=  Q \bowtie E$.   Since $\m$ is a maximal ideal consisting of zero-divisors, to finish the example it will suffice to show that $\m\notin \K_R(R)$.  To this end, it is enough to show that $\m R_\m$ is not in $\K_{R_\m}(R_\m)$.  We first claim that $0\bowtie E$ is the kernel of the canonical map $R\to R_\m$.  To see this,  define for $i=1,2,\ldots$ elements $e_i:=(\delta_{ik})\in E$.  Then in $R$ we have $(1, - e_i)(0,  e_i)= (0,0)$.  Since $(1, - e_i)\in R\setminus \m$, it follows that $0\bowtie E$ is contained in the kernel.   As $0\bowtie E$ is a prime ideal, it in fact  must equal the kernel, proving the claim.  It then follows that $R_\m = D_Q =D$ and $\m R_\m = Q$.   Since $Q$ is not an associated prime of the Noetherian ring $D$, we have the desired example.
  
Of course, $\m$ also fails to be in $\wAss(R)$ or $\sK(R)$, so strong Krull primes and weakly associated primes exhibit the same pathology.
\end{example}

\section*{Acknowledgements}
The authors wish to thank Yongwei Yao for pointing out Remark~\ref{rmk:Yao} and for finding an error in an earlier draft.

\providecommand{\bysame}{\leavevmode\hbox to3em{\hrulefill}\thinspace}
\providecommand{\MR}{\relax\ifhmode\unskip\space\fi MR }
\providecommand{\MRhref}[2]{%
  \href{http://www.ams.org/mathscinet-getitem?mr=#1}{#2}
}
\providecommand{\href}[2]{#2}

\end{document}